\documentclass[11pt,reqno]{amsart}
\usepackage{amscd,amsmath,amsxtra,amsthm,amssymb,stmaryrd,xr,mathrsfs,mathtools,enumerate,commath, comment, enumitem}
\usepackage{stmaryrd}
\usepackage{xcolor}
\usepackage{commath}
\usepackage{comment}
\usepackage{tikz-cd}
\usepackage[top=25mm,bottom=25mm,left=25mm,right=25mm]{geometry}
\usepackage{longtable} 
\usepackage{pdflscape} 
\usepackage{booktabs}
\usepackage{hyperref}
\definecolor{vegasgold}{rgb}{0.77, 0.7, 0.35}
\definecolor{darkgoldenrod}{rgb}{0.72, 0.53, 0.04}
\definecolor{gold(metallic)}{rgb}{0.83, 0.69, 0.22}
\hypersetup{
 colorlinks=true,
 linkcolor=darkgoldenrod,
 filecolor=brown,      
 urlcolor=gold(metallic),
 citecolor=darkgoldenrod,
 }

\usepackage[all,cmtip]{xy}

\DeclareFontFamily{U}{wncy}{}
\DeclareFontShape{U}{wncy}{m}{n}{<->wncyr10}{}
\DeclareSymbolFont{mcy}{U}{wncy}{m}{n}
\DeclareMathSymbol{\Sh}{\mathord}{mcy}{"58}
\usepackage[T2A,T1]{fontenc}
\usepackage[OT2,T1]{fontenc}

\newtheorem{theorem}{Theorem}[section]
\newtheorem{lemma}[theorem]{Lemma}

\newtheorem*{theorem*}{Theorem}
\newtheorem*{ass*}{Assumption}
\newtheorem{definition}[theorem]{Definition}
\newtheorem{corollary}[theorem]{Corollary}
\newtheorem{remark}[theorem]{Remark}

\newtheorem{proposition}[theorem]{Proposition}

\newcommand{\Z}{\mathbb{Z}}

\newcommand{\Q}{\mathbb{Q}}
\newcommand{\F}{\mathbb{F}}

\newcommand{\cL}{\mathcal{L}}

\newcommand{\G}{\mathcal{G}}

\newcommand{\op}[1]{\operatorname{#1}}

\newcommand\mtx[4] { \left( {\begin{array}{cc}
 #1 & #2 \\
 #3 & #4 \\
 \end{array} } \right)}

\numberwithin{equation}{section}

\begin{document}

\title[Rank stability in metabelian extensions]{Rank stability of elliptic curves
in \\ certain non-abelian extensions}

\author[S.~Pathak]{Siddhi Pathak}
\address[Pathak]{Chennai Mathematical Institute, H1, SIPCOT IT Park, Kelambakkam, Siruseri, Tamil Nadu 603103, India}
\email{siddhi@cmi.ac.in}

\author[A.~Ray]{Anwesh Ray}
\address[Ray]{Chennai Mathematical Institute, H1, SIPCOT IT Park, Kelambakkam, Siruseri, Tamil Nadu 603103, India}
\email{anwesh@cmi.ac.in}

\keywords{Rank stability, Malle-Bhargava principal, Selmer groups of elliptic curves}
\subjclass[2020]{11G05, 11R45}
\thanks{Research of the first author was partially supported by an INSPIRE fellowship.}

\begin{abstract}
Let $E_{/\Q}$ be an elliptic curve with rank $E(\Q)=0$. Fix an odd prime $p$, a positive integer $n$ and a finite abelian extension $K/\Q$ with rank $E(K) = 0$. In this paper, we show that there exist infinitely many extensions $L/K$ such that $L/\Q$ is Galois with $\operatorname{Gal}(L/\Q) \simeq \operatorname{Gal}(K/\Q) \ltimes \mathbb{Z}/p^n\mathbb{Z}$, and rank $E(L)=0$. This is an extension of earlier results on rank stability of elliptic curves in cyclic extensions of prime power order to a non-abelian setting. We also obtain an asymptotic lower bound for the number of such extensions, ordered by their absolute discriminant.


\end{abstract}

\maketitle

\section{Introduction}
\subsection{The setting and historical background} 

\bigskip 

\par An elliptic curve is a smooth, projective, algebraic curve of genus $1$, equipped with a distinguished point at $\infty$. These curves play a central role in number theory, algebraic geometry, and cryptography. The \emph{Mordell–Weil group} of an elliptic curve $E$, defined over a number field $K$, is the group of $K$-rational points $E(K)$, under a natural group law arising from the geometry of the curve. By the \emph{Mordell–Weil theorem}, $E(K)$ is a finitely generated abelian group, and can be expressed as  
\[
E(K) \cong \mathbb{Z}^r \oplus T,
\]
where:
\begin{itemize}
    \item $r \geq 0$ is an integer known as the \textit{Mordell–Weil rank} of $E$ over $K$, which measures the number of independent $K$-rational points of infinite order,
    \item $T$ is a finite abelian group, called the \textit{torsion subgroup}, consisting of points with finite order.
\end{itemize}
\par The rank $r$ is a deep arithmetic invariant of the elliptic curve and is the subject of intense study. The famous Birch and Swinnerton–Dyer conjecture provides a profound connection between the rank $r$ and the behavior of the Hasse–Weil $L$-function $L(E_{/K}, s)$ of $E$ at $s = 1$. Specifically, the conjecture asserts that $\operatorname{rank } E(K)$ is equal to the order of vanishing of $L(E_{/K}, s)$ at $s = 1$.

\par The points on $E(\overline{K})$, where $\overline{K}$ is the algebraic closure of $K$, naturally come equipped with an action of the absolute Galois group $\operatorname{Gal}(\overline{K}/K)$. This interaction allows for the arithmetic structure of $E(\overline{K})$ to be analyzed through Galois theoretic methods. A key tool in this analysis is the theory of Selmer groups, which consist of global Galois cohomology classes satisfying local conditions at all places. Selmer groups also carry information about the mysterious Tate–Shafarevich group $\Sh(E/K)$, whose elements consist of homogenous spaces measuring the failure of certain local-global principles for rational points on $E$.

\par Understanding the behaviour of the rank of an elliptic curve over extensions of number fields is a classical problem in number theory. One aspect of this study is to recognize the variation in the rank of an elliptic curve over field extensions. In particular, given an elliptic curve $E_{/\Q}$, one is interested in knowing how often is $\op{rank} E(L)>\op{rank} E(\Q)$, as $L/\Q$ varies over a family of number fields with fixed degree, ordered by absolute discriminant. In this regard, David, Fearnley and Kisilevsky \cite{DFK1, DFK2} studied the phenomenon of rank jumps of elliptic curves in cyclic extensions of prime order. More specifically, given an elliptic curve $E_{/\Q}$ and a prime $p$, they considered all cyclic extensions $L/\Q$ with Galois group isomorphic to $\Z/p\Z$, and asked how often $\op{rank} E(L)>\op{rank} E(\Q)$. Using the relationship between modular symbols and special values of cyclic twists of the Hasse-Weil $L$-function of $E$, they made predictions based on random matrix theory. In particular, it is conjectured that if $p\geq 7$, then there are only finitely many $\Z/p\Z$-extensions in which the rank can increase. These assertions are far from being resolved. \\

Mazur and Rubin \cite{MazurRubinQtwists} study distribution questions for $2$-primary Selmer groups in quadratic twist families of elliptic curves. Their methods have applications to showing that the rank remains stable for a large number of quadratic twists. Their results imply (under additional conditions) that the rank can also become arbitrarily large after base change by a quadratic extension. For any prime $p$, the growth and stability for $p$-Selmer groups of elliptic curves in $p$-cyclic extensions of a number field has been studied by Klagsbrun, Mazur and Rubin \cite{KMR}. Similar results have been obtained by Smith, see \cite{smith2022distribution1,smith2022distribution2} for further details. Park \cite{Park} has studied analogous questions over function fields. The phenomenon of rank jumps has been investigated further in other families of number field extensions. See for instance
Lemke-Oliver--Thorne \cite{Oliverthorne}, Schnidman--Weiss \cite{ShnidmanWeiss}, Beneish--Kundu--Ray \cite{BKR}, Berg--Ryan--Young \cite{Berg}. Moreover, the study of such rank stability questions has close connection to Hilbert's tenth problem, which has been recently studied for certain families of field extensions by Garcia--Fritz and Pasten \cite{garcia-fritzpasten} and Kundu--Lei--Sprung \cite{KLS}.\\ 

One advancement in this direction follows from the study of diophantine stability by Mazur-Rubin \cite{mazur2018diophantine}. They establish that for a simple abelian variety $A_{/K}$ with $\op{End}_{\overline{K}}(A)=\op{End}_K(A)$, there is a set $S$ of primes with positive density such that for all $\ell \in S$ and for all integers $n>0$, there are infinitely many $ \left(\Z/\ell^n \Z \right)$-extensions $L/K$ that have $A(L)=A(K)$. This phenomenon has been studied by Ray--Weston from a different perspective in \cite{ray2023diophantine}, where it is shown that for any prime $\ell\geq 5$, and any number field $K$, the density of elliptic curves $E_{/\Q}$, that are diophantine stable over $K$ at $\ell$ is $1$. \\

Another question of interest in number theory is the study of Galois extensions of a fixed number field with a given Galois group. Conjectures concerning the number of such extensions, ordered by their absolute discriminant, were proposed by Malle \cite{Malle1, Malle2}. These can be viewed as statistical generalizations of the inverse Galois problem for number fields. There has been considerable progress towards these predictions, especially when the Galois group is of a specific kind. For instance, Malle's conjecture has been proved for abelian groups by M\"aki \cite{maki1985density} and Wright \cite{wright1989distribution} and for certain nilpotent groups by Koymans and Pagano \cite{koymans2023malle}. Although the inverse Galois problem for finite solvable groups over number fields was resolved by Shafarevich \cite{Shaferevich}, Malle's conjecture for general solvable groups is still open.\\

In this paper, we merge the above two themes to address the problem of counting certain solvable extensions of $\Q$ that have the rank stability property. More specifically, given a finite abelian group $B$, an elliptic curve $E_{/\mathbb{Q}}$ with rank $E(\Q) = 0$, a rational prime $\ell$ and a positive integer $n>0$, under mild hypothesis, we demonstrate the existence of infinitely many extensions ${L}/\mathbb{Q}$ with $\operatorname{Gal}({L}/\mathbb{Q}) \simeq B \ltimes \Z/\ell^n \Z$ such that rank $E(L) =$ rank $E(K) =$ rank $E(\Q) = 0$. Additionally, we obtain a lower bound for the number of such metabelian extensions, ordered by their absolute discriminant. This can compared with the total number of $B \ltimes \Z/\ell^n \Z$ extensions of $\Q$ predicted by Malle's conjectures. \\

\subsection{Statement of the main result}
Let $p$ be an odd prime number and $n$ be a positive integer. Let $T$ denote the cyclic group $\Z/p^n \Z$. Let $B$ be an abelian group and 
$$\chi_0:B\rightarrow \op{Aut}(T)\xrightarrow{\sim}(\Z/p^n \Z)^\times$$ 
be a character. Let $\G:=B\ltimes T$ with respect to $\chi_0$. Here, the semi-direct product is defined by the relation $x h x^{-1}=\chi_0(x) h$ for all $x\in B$ and $h\in T$. Fix a Galois extension $K/\Q$ such that $\op{Gal}(K/\Q)$ is isomorphic to $B$. \\ 

A $(\G,K)$-extension $L/K/\Q$ is a Galois extension $L/\Q$ which contains $K$ and there exists an isomorphism $\op{Gal}(L/\Q)\xrightarrow{\sim} \G$ which induces an isomorphism of $\op{Gal}(L/K)$ with $T$. Thus, identifying $\op{Gal}(L/\Q)$ with $\G$, we find that $L^T=K$. Throughout the paper, we denote the absolute discriminant of $L$ as $\Delta_L$. \\

Denote by $\op{Sel}_p(E/L)$ the $p$-Selmer group of $E$ over $L$. Then, there is a natural short exact sequence
\[0\rightarrow E(L)/p E(L)\rightarrow \op{Sel}_p(E/L)\rightarrow \Sh(E/L)[p]\rightarrow 0.\]
Thus, if $\op{Sel}_p(E/L)=0$, then, in particular, the rank of $E(L)$ and the $p$-primary part of the Tate-Shafarevich group $\Sh(E/L)$ are zero.  \\

With the above notation, our main theorem is as follows.
\begin{theorem}\label{main thm intro}
    Let $p\geq 5$ be a prime number, $K/\Q$ be an abelian number field and $E_{/\Q}$ be an elliptic curve. Assume that the following conditions are satisfied.
\begin{enumerate}
    \item\label{c1 of main thm} The $p$-Selmer group $\op{Sel}_p(E/K)$ vanishes.
    \item\label{c2 of main thm} The field $K$ contains $\Q(\mu_{p^n})$. In particular, $B$ has order divisible by $p^{n-1}(p-1)$.
    \item\label{c3 of main thm} The character $\chi_0$ is nontrivial modulo $p$.
    \item\label{c4 of main thm} The intersection $K\cap \Q(E[p])$ is equal to $\Q(\mu_p)$.
    \item\label{c5 of main thm} The Galois representation 
    $\rho_{E, p}:\op{G}_{\Q}\rightarrow \op{GL}_2(\F_p)$ associated with the $p$-torsion of $E(\overline{\Q})$ is surjective.
\end{enumerate}
Then there are infinitely many $(\G,K)$-extensions $L/K/\Q$ such that $\op{Sel}_p(E/L)=0$. In particular, the rank of $E(L)$ is zero. Moreover, if $N_{\mathcal{G},K}(E;x)$ denotes the total number of $(\mathcal{G}, K)$-extensions such that $\op{Sel}_p(E/L)=0$ and $|\Delta_L|\leq x$, then we have
\[N_{\mathcal{G},K}(E;x)\gg_{\G,K}  \, \, x^{\frac{1}{|B|(p^n-1)}} \, \, (\log x)^{\left(p^{n-1}(p-1) \alpha\right)-1},\] where 
\[\alpha:=\left(\frac{p^2-p-1}{[K:\Q(\mu_p)](p-1)(p^2-1)}\right).\]
\end{theorem}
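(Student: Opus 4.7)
The plan is to realize the desired extensions via Kummer theory over $K$, identify a subfamily on which the $p$-Selmer group is forced to remain zero by an inflation--restriction argument, and then count that subfamily by effective Chebotarev. Since $K\supseteq\Q(\mu_{p^n})$ by condition (2), every cyclic degree-$p^n$ extension of $K$ arises as $L=K(\sqrt[p^n]{a})$ for some class $[a]\in K^\times/(K^\times)^{p^n}$, and a direct computation with the Kummer pairing shows that $L/\Q$ is Galois with $\op{Gal}(L/\Q)\simeq B\ltimes_{\chi_0}T$ if and only if $[a]$ spans a $B$-stable line on which $B$ acts through the character $\overline{\chi}:=\chi_{\op{cyc}}\chi_0^{-1}$, where $\chi_{\op{cyc}}:B\to(\Z/p^n\Z)^\times$ encodes the action of $B$ on $\mu_{p^n}$. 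Condition (3) ensures $\overline{\chi}$ is non-trivial modulo $p$, so that the $\overline{\chi}$-eigenspace of $K^\times/(K^\times)^{p^n}$ meets the image of $\Q^\times$ trivially and genuinely yields non-abelian $L/\Q$.

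For the Selmer input, conditions (4) and (5) together imply $L\cap\Q(E[p])=\Q(\mu_p)$ and hence $E(L)[p]=0$. Applying inflation--restriction for $G_K\twoheadrightarrow T$ to the $p$-Selmer complex then produces an exact sequence of the shape
\[0\to\op{Sel}_p(E/K)\to\op{Sel}_p(E/L)^T\to\bigoplus_{v\in\Sigma_{\mathrm{ram}}}H^1_{\mathrm{sing}}(K_v,E[p]),\]
where $\Sigma_{\mathrm{ram}}$ is the set of primes of $K$ ramifying in $L$. Since $T$ is a $p$-group and $\op{Sel}_p(E/L)$ is an $\F_p$-vector space, vanishing of the $T$-invariants is equivalent to vanishing of the full group, so together with condition (1) the problem reduces to killing the rightmost term. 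Call a rational prime $q$ \emph{admissible} if it splits completely in $K$, is coprime to $p\cdot N_E$, and $\op{Frob}_q$ in $\op{Gal}(\Q(E[p])/\Q)\simeq \op{GL}_2(\F_p)$ has no eigenvalue equal to $1$. By condition (5) and Chebotarev these form a set of positive density, and for such $q$ one has $E(K_v)[p]=0$ for every $v\mid q$, killing the associated local term.

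To construct admissible $a$, for each admissible $q$ pick a prime $\mathfrak{q}$ of $K$ above $q$ and form $a_q:=\prod_{\sigma\in B}\sigma(\pi_{\mathfrak{q}})^{\overline{\chi}(\sigma)}$, where $\pi_{\mathfrak{q}}$ is a generator of a suitable power of $\mathfrak{q}$ (after absorbing class-group and unit obstructions). Products of the $a_q$ lie in the $\overline{\chi}$-eigenspace by construction, and by the previous paragraph they map trivially into each local singular quotient at the ramified primes, so the associated $L$ satisfies $\op{Sel}_p(E/L)=0$. For the counting, $|\Delta_L|\asymp\prod_{q}q^{|B|(p^n-1)}$ over the ramified rational primes, whence $|\Delta_L|\le x$ amounts to $\prod q\le x^{1/(|B|(p^n-1))}$. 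An effective Chebotarev count gives the density of admissible primes inside $\op{Gal}(K(E[p])/\Q)$, and a Landau-type sieve for square-free products of admissible primes lying in the $\overline{\chi}$-eigenspace produces the factor $(\log x)^{p^{n-1}(p-1)\alpha-1}$, with the constant $\alpha$ in the theorem statement arising as the admissible Frobenius density normalized by $[K:\Q(\mu_p)]$.

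The main obstacle will be reconciling the Kummer parameterization with the local Selmer vanishing: one must simultaneously (a) constrain $a$ to lie in the prescribed $\overline{\chi}$-eigenspace of $K^\times/(K^\times)^{p^n}$ so that $L/\Q$ acquires the correct $\G$-structure, (b) arrange for each prime divisor of $a$ to contribute trivially to the singular local cohomology at the ramified places, and (c) show that enough such $a$ exist to saturate the counting bound. This will require a twisted analysis of the $\overline{\chi}$-part of the class and unit groups of $K$, together with precise control on local Kummer symbols at admissible primes.
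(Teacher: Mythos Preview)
Your overall strategy parallels the paper's: parameterize $(\G,K)$-extensions by twisted Kummer data, force $\op{Sel}_p(E/L)^T=0$ via local control at the ramified primes, and count by a Tauberian argument. Your admissible-prime condition (Frobenius with no eigenvalue $1$) is equivalent to the paper's set $\mathfrak{T}_E$ (trace $\ne 2$ inside $\op{SL}_2(\F_p)$), and the Delange endgame is identical. Two points, however, are genuine gaps.

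First, your exact sequence places the local obstruction only at $\Sigma_{\mathrm{ram}}$, which presupposes that the local Selmer conditions over $K$ and $L$ agree at every prime $v$ unramified in $L/K$. That holds for $v\nmid p$ of good reduction (Lang's theorem for the reduction), but not automatically for $v\mid p$ or for $v$ of bad reduction: the component group of the N\'eron model can contribute $p$-torsion to $H^1(L_w/K_v,E(L_w))$. Your Kummer elements $a=\prod_q a_q$ supported only at admissible primes make the primes above $p\cdot N_E$ \emph{unramified} in $L$, not \emph{split}. The paper closes this by imposing the strict local condition $\cL_\ell=0$ at every $\ell\in S_\Q$, which forces complete splitting there (Proposition~\ref{cor 2.7}); without that condition the Selmer-vanishing step is incomplete.

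Second, you rightly flag the $\overline{\chi}$-part of the class and unit groups of $K$ as the main obstacle, but the proposal offers no mechanism to resolve it. The paper avoids explicit Kummer generators altogether: it works directly with $M=(\Z/p^n\Z)(\chi)$ and the Selmer group $H^1_Y(\Q_\Sigma/\Q,M)$ (strict at $S_\Q$, relaxed at $Y$), and its key device is to choose a finite auxiliary set $Z\subset\mathfrak{T}_E$, via Chebotarev in $K_\psi(E[p])/\Q$, so that the dual Selmer group $H^1_Y(\Q_\Sigma/\Q,M^\vee)$ collapses to $\op{inf}_K(H^1(B,M^\vee))$ whenever $Y\supseteq Z$ (Proposition~\ref{tech prop 1}). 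Wiles' duality formula then gives the exact size of $H^1_Y$, and a short exact sequence (Proposition~\ref{ses corollary}) isolates the classes totally ramified at $V$ with multiplicity $((p-1)p^{n-1})^{|V|}$. In your language, the primes in $Z$ are precisely what annihilate the $\overline{\chi}$-component of the class and unit obstructions; your explicit construction of the $a_q$ provides no substitute for this step, and without it the asserted lower bound on $\#\mathfrak{B}_V$ is not justified.
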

The above result is obtained as an immediate consequence of Theorem \ref{main thm}, proved in Section 4 and Theorem \ref{discriminant density theorem}, proved in Section 5.\\

Given a positive real number $x>0$, set $M_{\G, K}(x)$ to denote the total number of $(\mathcal{G}, K)$-extensions $L/K/\Q$ such that $|\Delta_L|\leq x$. A variant of the \emph{Malle-Bhargava principle} predicts that 
\begin{equation}
    M_{\G, K}(x) \, \sim \, c \, x^{\frac{1}{|B| \, (p-1) \, p^{n-1}}}  \, (\log x)^b,
\end{equation}
where $c>0$ and $b\in \Z$ are constants that depend on $\G$ and $K$. For further details, see \cite[p. 2519 and p.2530]{Alberts} and \cite{BhargavaIMRN}. Thus, the number $N_{\mathcal{G},K}(E;x)$ differs from the total number $M_{\G, K}(x)$ up to a power of $x$ to the order of $O\left(\frac{1}{|B|p^{n+1}}\right)$ and a factor of $\log x$.\\

An interesting class of examples to which Theorem \ref{main thm intro} applies are when $K=\Q(\mu_{p^n})$ and $\chi_0(x)=x^k$, where $k$ is not divisible by $(p-1)$. In this case, $\chi_0$ is $k$-th power of the mod-$p^n$ cyclotomic character. Then the conditions \eqref{c2 of main thm} and \eqref{c3 of main thm} of Theorem \ref{main thm intro} are clearly satisfied. If the representation $\rho_{E,p}$ is surjective, then \[\op{Gal}(\Q(E[p])/\Q(\mu_p))\simeq \op{SL}_2(\F_p).\] Since $p\geq 5$, the group $\op{PSL}_2(\F_p)$ is simple, $\op{SL}_2(\F_p)$ does not have any abelian quotients, with cardinality divisible by $p$. Hence, the condition \eqref{c4 of main thm} follows from \eqref{c5 of main thm} when $K=\Q(\mu_{p^n})$, and Theorem \ref{main thm intro} is applicable. It is worth noting that the representation $\rho_{E,p}$ is surjective for most elliptic curves, when ordered according to height by a theorem of Duke \cite{duke1997elliptic}.\\

\par When $n=1$, so $K = \Q(\mu_p)$, we obtain a stronger result (Theorem \ref{n=1 thm} from Section \ref{s 3}) which is also valid for $p=3$ and easier to state. At the end of the Section \ref{s 3}, we provide an explicit example for which our hypotheses are satisfied. 

\subsection{Method of proof} We now describe the ideas that go into the proofs of Theorem \ref{main thm} and Theorem \ref{discriminant density theorem}. In Section \ref{s 2}, given a $p^n$ cyclic extension $L/K$ and an elliptic curve $E_{/K}$ such that $\op{Sel}_p(E/K)=0$, we derive conditions for $\op{Sel}_p(E/L)=0$. Proposition \ref{cor 2.7} asserts that if the following conditions hold, then, $\op{Sel}_p(E/L)=0$.
\begin{enumerate}
        \item The Selmer group $\op{Sel}_p(E/K)$ is $0$.
        \item All primes of $K$ that lie above $p$ and the primes of bad reduction for $E$ are completely split in $L$.
        \item All primes $v$ of $K$ that ramify satisfy $\widetilde{E}(k_v)[p]=0$. Here, $\widetilde{E}$ denotes the reduction of $E$ at $v$ and $k_v$ is the residue field at $v$.
    \end{enumerate}
Suppose that $E$ and $K$ satisfy the conditions of Theorem \ref{main thm intro}. In Section \ref{s 4}, we parameterize $(\G,K)$-extensions satisfying certain local conditions by Selmer classes. These are global Galois $1$-cohomology classes taking values in $\Z/p^n \Z(\chi)$ and satisfying various local conditions at a certain set of primes $Y$. We show that these extensions satisfy the hypotheses of Proposition \ref{cor 2.7} mentioned above. Finally, we count the number of such cohomology classes. In order to do this, we invoke a formula due to Wiles (cf. Theorem \ref{wiles thm}) that relates the size of the Selmer group to that of a certain dual Selmer group, along with local terms arising from the Selmer conditions. The set of primes $Y$ will be chosen to contain a prescribed set of primes $Z$ so that the dual Selmer group becomes independent of $Y$. These choices play a crucial role in obtaining the main results Theorem \ref{main thm}, that is, when $n\geq 1$. The density of these extensions ordered by discriminant is proved in Theorem \ref{discriminant density theorem}, which relies on Theorem \ref{main thm} and an application of Delange's Tauberian theorem. This result is proved in Section \ref{s 5}.

\bigskip


\section{\bf Growth of $p$-Selmer groups in $p$-extensions}\label{s 2}

\par Let $p$ be an odd prime number and $K$ be a number field. Given an elliptic curve $E_{/K}$, and an algebraic extension $L/K$, we denote by $E_{/L}$ the base changed curve over $L$ and $E(L)$ its Mordell-Weil group. Fix an algebraic closure $\bar{L}$ of $L$ and set $\op{G}_L$ to denote the absolute Galois group $\op{Gal}(\bar{L}/L)$. Let $\Omega_L$ be the set of all non-archimedean places of $L$ and for $v\in \Omega_L$, denote by $L_v$ the completion of $L$ at $v$. Choose an algebraic closure $\bar{L}_v$ and an embedding $\iota_v: \bar{L}\hookrightarrow \bar{L}_v$. Setting $\op{G}_{L_v}:=\op{Gal}(\bar{L}_v/L_v)$, we find that this induces an embedding $\iota_v^*: \op{G}_{L_v}\hookrightarrow \op{G}_L$. For ease of notation, we shall use \[\begin{split} & H^i(L, \cdot):=H^i(\op{G}_{L}, \cdot),\\ 
& H^i(L_v, \cdot):=H^i(\op{G}_{L_v}, \cdot)\end{split}\] for $i\geq 0$ and $v\in \Omega_L$. Given a global cohomology class $f$, denote by $\op{res}_v(f)$ the restriction of $f$ to $\op{G}_{L_v}$.\\

\par For $n\geq 1$ denote by $E[p^n]$ the $p^n$-torsion subgroup of $E(\bar{K})$; set $E[p^\infty]:=\bigcup_{n\geq 1} E[p^n]$. Let $\mathfrak{G}$ denote either $\op{G}_L$ or $\op{G}_{L_v}$ for some $v\in \Omega_L$. Consider the short exact sequence of $\mathfrak{G}$-modules 
\[0\rightarrow E[p^n]\rightarrow E\xrightarrow{\times p^n} E\rightarrow 0.\] We obtain a long exact sequence in cohomology from which we derive the following exact sequence
\[0\rightarrow E(\cL)\otimes_{\Z} \Z/p^n \Z\xrightarrow{\kappa_{\cL,n}} H^1(\cL, E[p^n])\rightarrow H^1(\cL, E)[p^n]\rightarrow 0,\]
where $\cL$ denotes $L$ or $L_v$. Passing to the direct limit, we obtain an important short exact sequence
\[0\rightarrow E(\cL)\otimes_{\Z} \Q_p/\Z_p \xrightarrow{\kappa_{\cL,\infty}} H^1(\cL, E[p^\infty])\rightarrow H^1(\cL, E)[p^\infty]\rightarrow 0.\]

\par The Selmer group is then defined as 
\[\op{Sel}_{p^n}(E/L):=\op{ker}\left(H^1(L, E[p^n])\longrightarrow \prod_{v\in \Omega_L} H^1(L_v, E)[p^n]\right).\] In other words, $\op{Sel}_{p^n}(E/L)$ consists of classes $f\in H^1(L, E[p^n])$ such that for all $v\in \Omega_L$, we find that $\op{res}_v (f)\in \op{Image}(\kappa_{L_v, n})$. Let $\Sh(E/L)$ denote the Tate-Shafarevich group of $E_{/L}$. The Selmer group fits into a short exact sequence
\begin{equation}\label{Selmer TS sequence}0\rightarrow E(L)\otimes \Z/p^n \rightarrow \op{Sel}_{p^n}(E/L)\rightarrow \Sh(E/L)[p^n]\rightarrow 0. \end{equation} Taking the direct limit over $n$, we have the following short exact sequence
\[0\rightarrow E(L)\otimes \Q_p/\Z_p\rightarrow \op{Sel}_{p^\infty}(E/L)\rightarrow \Sh(E/L)[p^\infty]\rightarrow 0.\]

\begin{definition}\label{choice of S} Let $S$ be the finite set of primes $v$ of $K$ such that at least one of the following conditions is satisfied:
 \begin{itemize}
        \item $v$ is ramified in $L$, 
        \item $E$ has bad reduction at $v$, 
        \item $v$ divides $p$.
    \end{itemize}
\end{definition}  We write $S(L)$ to denote the primes of $L$ that lie above $S$. Let $L_{S}$ be the maximal extension of $L$ that is contained in $\bar{L}$ in which all non-archimedean primes $v\notin S$ are unramified. Then, the Selmer group $\op{Sel}_p(E/L)$ is given by
\begin{equation}\label{Cassels Tate les}\op{Sel}_p(E/L):=\op{ker}\left(H^1(L_S/L, E[p])\xrightarrow{\Phi_{S,L}} \bigoplus_{w\in S(L)} H^1(L_w, E)[p]\right),\end{equation} where $\Phi_{S,L}$ is the direct sum of restriction maps for the primes $w\in S(L)$. For further details, see \cite[section 1, Corollary 6.6]{Milne}. \\

Given a finite abelian group $M$, denote by $M^\vee$ the Pontryagin dual of $M$. Recall the Cassels-Tate long exact sequence
\[\begin{split} & 0\rightarrow \op{Sel}_p(E/L)\rightarrow H^1(L_S/L, E[p])\xrightarrow{\Phi_{S,L}} \bigoplus_{w\in S(L)} H^1(L_w, E)[p] \\ & \rightarrow \op{Sel}_p(E/L)^\vee\rightarrow H^2(L_S/L, E[p])\rightarrow \dots ,\end{split}\]
cf. \cite[p.8]{GCEC}. \\

\noindent Let $G$ be a finite group of $p$-power order and $M$ be a finite $p$-primary $G$-module. Set $M^G:=\{m\in M\mid g m=m \text{ for all }g\in G\}$ and $M_G:=M/\langle (g-1)m \text{ for }g\in G\text{ and }m\in M\rangle$. We record a useful lemma below.  
\begin{lemma}\label{NSW lemma}
If $M^G=0$ or $M_G=0$, then $M=0$.
\end{lemma}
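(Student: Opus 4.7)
The plan is to reduce both implications to the standard orbit-counting argument for a $p$-group acting on a finite set of $p$-power cardinality, together with Pontryagin duality to interchange invariants and coinvariants.

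First I would handle the case $M^G=0$. Since $G$ is a $p$-group, every $G$-orbit in $M$ has cardinality $[G:\op{Stab}(m)]$, which is a power of $p$. Decomposing $M$ as a disjoint union of orbits and separating the fixed points (the orbits of size $1$) from the rest gives
\[
|M| \;\equiv\; |M^G| \pmod{p}.
\]
If $M$ were nonzero, then $|M|$ would be a positive power of $p$, so $|M|\equiv 0 \pmod p$, forcing $|M^G|\equiv 0\pmod p$. Since $0\in M^G$, we would then have $|M^G|\geq p$, contradicting $M^G=0$. Hence $M^G=0$ implies $M=0$.

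Next I would deduce the coinvariants version from the invariants version via Pontryagin duality. Equip $M^{\vee}:=\op{Hom}(M,\Q_p/\Z_p)$ with the contragredient $G$-action; this is again a finite $p$-primary $G$-module. By the universal property defining $M_G$ as the largest quotient of $M$ on which $G$ acts trivially, a homomorphism $M\to \Q_p/\Z_p$ is $G$-equivariant if and only if it factors through $M_G$, yielding a canonical isomorphism
\[
(M^{\vee})^G \;\cong\; (M_G)^{\vee}.
\]
If $M_G=0$ then $(M_G)^{\vee}=0$, so $(M^{\vee})^G=0$; applying the first part to $M^{\vee}$ gives $M^{\vee}=0$, and therefore $M=0$.

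There is no real difficulty here beyond bookkeeping; the only point requiring a line of justification is the identification $(M^{\vee})^G \cong (M_G)^{\vee}$, which is immediate from the definitions of the invariant and coinvariant functors and the exactness of Pontryagin duality on finite abelian groups.
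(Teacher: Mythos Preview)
Your proof is correct. The paper itself does not give an argument; it simply cites \cite[Proposition 1.6.12]{NSW}. What you have written is precisely the standard proof one would find behind such a citation: the orbit-counting congruence $|M|\equiv |M^G|\pmod p$ for a $p$-group action forces $M^G\neq 0$ whenever $M\neq 0$, and the coinvariants case follows by Pontryagin duality via the identification $(M^\vee)^G\cong (M_G)^\vee$. So there is nothing to compare in terms of strategy---you have supplied the content that the paper outsources to a reference.
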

\begin{proof}
The stated result is \cite[Proposition 1.6.12]{NSW}.
\end{proof}

\par Henceforth, we set $L/K$ be a finite Galois extension and set $G:=\op{Gal}(L/K)$. Assume that $G$ is a $p$-group, i.e., $|G|$ is a power of $p$. Let $E$ be an elliptic curve over $K$. Given a prime $v$ of $K$, let $\gamma_v$ be the restriction map 
\begin{equation}\label{def of gamma_v}
   \gamma_v: H^1(K_v, E)[p]\rightarrow \bigoplus_{w|v} H^1(L_w, E)[p]
\end{equation}
where $w$ ranges over all primes of $L$ that lie above $v$.
\begin{proposition}\label{vanishing of selmer propn}
    With respect to notation above, assume that the Selmer group \[\op{Sel}_p(E/K)=0.\] Let $S$ be as in Definition \ref{choice of S}. Then the following assertions hold.
    \begin{enumerate}
        \item[(i)] We have that
\[\dim_{\F_p}\left(\op{Sel}_p(E/L)^G\right)= \sum_{v\in S}\dim_{\F_p}\left(\op{ker}\gamma_v\right).\] 
    \item[(ii)] The $p$-Selmer group $\op{Sel}_p(E/L) = 0$ if and only if $\gamma_v$ is injective for all $v \in S$.
    \end{enumerate}
\end{proposition}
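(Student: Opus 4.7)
The plan is to reduce the computation of $\op{Sel}_p(E/L)^G$ to a purely local calculation by means of two isomorphisms extracted from the hypothesis $\op{Sel}_p(E/K)=0$. First, the short exact sequence $0 \to E(K)/pE(K) \to \op{Sel}_p(E/K) \to \Sh(E/K)[p] \to 0$ gives $E(K)[p]=0$. Since $E(L)[p]$ is a $p$-primary $G$-module with $E(L)[p]^G = E(K)[p] = 0$ and $G$ is a $p$-group, Lemma~\ref{NSW lemma} upgrades this to $E(L)[p]=0$. Noting that $L \subset K_S$ and hence $L_S = K_S$, the Hochschild-Serre sequence
\[
0 \to H^1(G, E(L)[p]) \to H^1(K_S/K, E[p]) \xrightarrow{h} H^1(L_S/L, E[p])^G \to H^2(G, E(L)[p])
\]
then degenerates and gives that $h$ is an isomorphism. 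Second, the Cassels-Tate sequence over $K$ reads
\[
0 \to \op{Sel}_p(E/K) \to H^1(K_S/K, E[p]) \xrightarrow{\Phi_{S,K}} \bigoplus_{v \in S} H^1(K_v, E)[p] \to \op{Sel}_p(E/K)^\vee \to \cdots,
\]
and since both $\op{Sel}_p(E/K)$ and its Pontryagin dual vanish, exactness forces $\Phi_{S,K}$ to be an isomorphism as well.

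Next I would assemble the commutative square
\[
\begin{tikzcd}[column sep=large]
H^1(K_S/K, E[p]) \arrow[r, "\Phi_{S,K}", "\sim"'] \arrow[d, "h", "\sim"'] & \bigoplus_{v \in S} H^1(K_v, E)[p] \arrow[d, "g \,=\, \bigoplus_v \gamma_v"] \\
H^1(L_S/L, E[p])^G \arrow[r, "\Phi_{S,L}^G"'] & \bigl(\bigoplus_{w \in S(L)} H^1(L_w, E)[p]\bigr)^G
\end{tikzcd}
\]
in which the image of $g$ lies in the $G$-invariants by functoriality of restriction, and in which $\ker \Phi_{S,L}^G = \op{Sel}_p(E/L)^G$ by applying the left-exact functor of $G$-invariants to the Selmer-defining sequence over $L$. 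Because the top horizontal map and both vertical maps are isomorphisms, the bottom horizontal map is conjugate to $g$; in particular their kernels are isomorphic, yielding $\op{Sel}_p(E/L)^G \cong \ker g = \bigoplus_{v \in S} \ker \gamma_v$. This establishes part (i).

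Part (ii) then follows formally: if each $\gamma_v$ is injective, (i) forces $\op{Sel}_p(E/L)^G = 0$, and a second application of Lemma~\ref{NSW lemma} (with the $p$-group $G$ acting on the $p$-primary module $\op{Sel}_p(E/L)$) gives $\op{Sel}_p(E/L) = 0$; the converse is trivial. The one delicate point is the Cassels-Tate surjectivity of $\Phi_{S,K}$, which rests on the vanishing of $\op{Sel}_p(E/K)^\vee$. Without this ingredient, a naive snake lemma argument would only give the inequality $\dim_{\F_p} \op{Sel}_p(E/L)^G \le \sum_v \dim_{\F_p} \ker \gamma_v$, which suffices for the ``if'' direction of (ii) but is insufficient for the equality in (i) or for the ``only if'' direction.
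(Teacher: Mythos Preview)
Your proof is correct and follows essentially the same route as the paper: both set up the commutative diagram comparing the Selmer-defining sequences over $K$ and $L$, use the vanishing of $E(L)[p]$ (via Lemma~\ref{NSW lemma}) to force the restriction map $h=\beta$ to be an isomorphism, and then read off $\op{Sel}_p(E/L)^G \cong \ker\gamma$. The paper packages the diagram chase as a snake-lemma sequence $0\to\ker\beta\to\ker\gamma\to\op{Sel}_p(E/L)^G\to\op{cok}\beta$, whereas you use directly that $\Phi_{S,K}$ is an isomorphism; the content is identical.

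One slip to fix: you wrote ``the top horizontal map and both vertical maps are isomorphisms,'' but the right vertical map is $g$ itself, whose kernel you are computing. What you need (and what you have established) is that the top map $\Phi_{S,K}$ and the \emph{left} vertical map $h$ are isomorphisms; then $\Phi_{S,L}^G = g\circ(\Phi_{S,K}\circ h^{-1})$ differs from $g$ by precomposition with an isomorphism, so their kernels are isomorphic.
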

\begin{proof}
Since it is assumed that $\op{Sel}_p(E/K)=0$, the map $\Phi_{S,K}$ in \eqref{Cassels Tate les} is surjective. From the exact sequence \eqref{Selmer TS sequence}, we find that $E(K)\otimes \Z/p\Z=0$. Consequently, $\op{rank} E(K)=0$ and $E(K)[p]=0$. Consider the commutative diagram
    \begin{equation}\label{fdiagram}
\begin{tikzcd}[column sep = small, row sep = large]
& 0=\op{Sel}_p(E/K) \arrow{r}\arrow{d}{\alpha} & H^1(K_S/K, E[p])\arrow{r}{\Phi_{S,K}} \arrow{d}{\beta} & \bigoplus_{v\in S} H^1(K_v, E)[p] \arrow{d}{\gamma} \arrow{r} & 0 \\
0\arrow{r} & \op{Sel}_p(E/L)^G\arrow{r} & H^1(L_S/L, E[p])^G\arrow{r}  &\left(\bigoplus_{w\in S(L)} H^1(L_w, E)[p]\right)^G,
\end{tikzcd}
\end{equation}
where $\gamma:=\bigoplus_{v\in S} \gamma_v$. Since $S$ contains the primes that are ramified in $L$, we can identify $L_S$ with $K_S$. The map $\beta$ is the inflation map from the inflation-restriction sequence. Because the right square in the diagram involving $\beta$ and $\gamma$ commutes, it follows that $\beta$ restricts to a map $\alpha$. From the above diagram, we obtain the long exact sequence
\[0\rightarrow \op{ker}\beta \rightarrow \op{ker}\gamma \rightarrow \op{Sel}_p(E/L)^G\rightarrow \op{cok}\beta\rightarrow \op{cok}\gamma.\]
The inflation restriction sequence now implies that
\[\begin{split}
    \op{ker}\beta\simeq H^1(L/K, E(L)[p]);\\
    \op{cok}\beta\subseteq  H^2(L/K, E(L)[p]).
\end{split}\]
As $L/K$ is a $p$-extension and $E(L)[p]^{G}=E(K)[p]=0$, it follows from Lemma \ref{NSW lemma} that $E(L)[p]=0$. Therefore, $\op{ker}\beta=\op{cok}\beta=0$, and we have an isomorphism 
\[\op{Sel}_p(E/L)^G\simeq \op{ker}\gamma,\] which proves assertion $(i)$. The conclusion $(ii)$ is now evident from Lemma \ref{NSW lemma}.
\end{proof}

We have thus reduced our problem to understanding when the maps $\gamma_v$ are injective for $v \in S$. 
\begin{remark}\label{v-split-injective-remark}
It is clear that if $v$ splits completely in $L$, then $\gamma_v$ is injective. 
\end{remark}
In our applications, we will only consider extensions $L/K$ such that if $v \mid p$ or if $v$ is a prime of bad reduction for $E$, then $v$ splits completely in $L$. Moreover, we will assume henceforth that 
\begin{equation*}
    G \simeq \Z/p^n\Z \qquad \text{ for some } n \in \Z_{\geq 1}.
\end{equation*}
Thus, the remaining section focuses on studying $\op{ker} \gamma_v$ when $v$ is a prime of good reduction, $v \nmid p$ and $v$ is not completely split in $L$. This is established in Proposition \ref{local computations propn}. \\

To this end, we first prove some intermediate results. 
\begin{lemma}\label{vanishing cohomology prop}
    Let $G$ be a finite $p$ group and $M$ be a $G$-module. Assume that $M$ is uniquely divisible by $p$, i.e., the multiplication by $p$ map is an automorphism of $M$. Then $H^i(G, M)=0$ for all $i\geq 1$. 
\end{lemma}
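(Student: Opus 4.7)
The plan is to combine two classical facts about group cohomology. First, since $G$ is a finite group of order $p^k$ for some $k \geq 0$, a standard result (via the composition $\op{cor} \circ \op{res}$, which equals multiplication by $|G|$) gives that $|G| = p^k$ annihilates $H^i(G, M)$ for every $i \geq 1$ and every $G$-module $M$. Second, multiplication by $p$ on the coefficients $M$ is functorial, so it induces multiplication by $p$ on $H^i(G, M)$ for every $i$.

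Under the hypothesis that multiplication by $p$ is an automorphism of $M$, the induced endomorphism of $H^i(G, M)$ is also an automorphism. Iterating $k$ times, multiplication by $p^k$ is an automorphism of $H^i(G, M)$. Combining this with the fact that $p^k$ annihilates $H^i(G, M)$ forces $H^i(G, M) = 0$ for all $i \geq 1$.

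I expect no serious obstacle here; the argument is purely formal once the two standard inputs are in hand. If preferred, one could cite a textbook reference (for instance \cite[Corollary 1.6.1]{NSW}, in the same style as the earlier appeal to Lemma \ref{NSW lemma}) rather than reproducing the proof. The main decision is simply whether to sketch the annihilation argument via corestriction-restriction explicitly or to invoke it as known.
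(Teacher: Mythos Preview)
Your proposal is correct and follows exactly the same approach as the paper: both use that multiplication by $p$ on $M$ induces an automorphism of $H^i(G,M)$ by functoriality, while the composite $\op{cor}\circ\op{res}$ through the trivial subgroup shows that $|G|=p^k$ annihilates $H^i(G,M)$ for $i\geq 1$, forcing vanishing. The only cosmetic difference is that the paper spells out the restriction--corestriction step with a displayed diagram, whereas you summarize it in words.
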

\begin{proof}
    Since $M$ is uniquely divisible by $p$, it follows that multiplication by $p$ on $H^i(G, M)$ is an automorphism. On the other hand, $G$ is a finite $p$-group and hence every element of $H^i(G, M)$ is annihilated by a power of $p$. In greater detail, the restriction co-restriction homomorphism
    \[ H^i(G, M)\xrightarrow{\op{res}} H^i(\{1\}, M)\xrightarrow{\op{cor}} H^i(G, M)\] is multiplication by $|G|$, and hence multiplication by $|G|$ is trivial on $H^i(G, M)$. Therefore, it follows that $H^i(G, M)=0$ for all $i\geq 1$.
\end{proof}

Let $k$ be a finite field and $G$ be a connected smooth algebraic group over $k$. Given a field extension $\mathfrak{l}/k$, let $G(\mathfrak{l})$ denote the $\mathfrak{l}$-rational points of $G$.

\begin{theorem}[Lang]\label{langthm}
   The cohomology group $H^1(\op{Gal}(\bar{k}/k), G(\bar{k}))$ vanishes. 
\end{theorem}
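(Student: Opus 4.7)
The plan is to prove Lang's theorem by the classical argument via the Lang map. Let $F: G \to G$ denote the $q$-th power Frobenius morphism relative to $k$, and define $L: G \to G$ on $\bar k$-points by $L(x) := x^{-1} F(x)$. Since $\op{Gal}(\bar{k}/k)$ is procyclic, topologically generated by $F$, every continuous $1$-cocycle $c: \op{Gal}(\bar{k}/k) \to G(\bar{k})$ is determined by $g := c(F) \in G(\bar{k})$, and such a cocycle is a coboundary precisely when there exists $h \in G(\bar{k})$ with $g = h^{-1} F(h) = L(h)$. Consequently, the theorem reduces to showing that the Lang map $L$ induces a surjection on $\bar{k}$-points.

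First I would establish that $L: G \to G$ is an \'etale morphism. Differentiating the identity $F(x) = x \cdot L(x)$, and using that $F$ is purely inseparable so that $dF_x = 0$ for every $x$, the product rule gives $dL_x(v) = -x^{-1} \cdot v \cdot L(x)$. This is a composition of the isomorphisms given by left translation by $x^{-1}$ and right translation by $L(x)$ (up to sign), hence a linear isomorphism at every $x \in G(\bar{k})$. Therefore $L$ is everywhere \'etale, and in particular $L(G)$ is an open subvariety of $G$.

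To conclude surjectivity, fix $g \in G(\bar{k})$ and introduce the twisted map $\psi_g: G \to G$ defined by $\psi_g(x) = x^{-1} g F(x)$. The identical product-rule computation (again using $dF = 0$) shows that $\psi_g$ is \'etale, so $\psi_g(G) \subseteq G$ is also a nonempty open subset. Since $G$ is connected and smooth, it is irreducible, so the two nonempty opens $L(G)$ and $\psi_g(G)$ must intersect. Choosing $x, y \in G(\bar{k})$ with $L(x) = \psi_g(y)$, that is, $x^{-1} F(x) = y^{-1} g F(y)$, and using that $F$ is a group homomorphism, one checks that $z := x y^{-1}$ satisfies $L(z) = z^{-1} F(z) = y x^{-1} F(x) F(y)^{-1} = g$, which exhibits $g$ in the image of $L$.

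The essential content, and the only real obstacle, is the \'etaleness of $L$, which rests entirely on the vanishing of the Frobenius differential. Once that is in hand, irreducibility of the smooth connected variety $G$ supplies the only geometric input, and the translation back into the vanishing of $H^1(\op{Gal}(\bar{k}/k), G(\bar{k}))$ is formal given the procyclicity of the Galois group.
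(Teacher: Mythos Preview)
Your argument is correct and is precisely Lang's original proof via the surjectivity of the Lang isogeny $x\mapsto x^{-1}F(x)$. The paper does not give an independent proof of this statement at all: its ``proof'' consists solely of the citation ``The statement above follows from \cite[Theorem 2]{Lang}.'' So there is nothing to compare at the level of strategy; you have supplied in full the argument that the paper outsources. Your reduction from $H^1$ to surjectivity of $L$ using procyclicity of $\op{Gal}(\bar k/k)$ is the standard translation, the \'etaleness of $L$ and of the twisted map $\psi_g$ via $dF=0$ is exactly Lang's computation, and the irreducibility step (two nonempty opens in a smooth connected group must meet) is the right way to finish. One cosmetic remark: the explicit formula $dL_x(v)=-x^{-1}\cdot v\cdot L(x)$ is a slight abuse (the honest statement is that $dL_x$ factors as the differential of inversion followed by a right translation, both isomorphisms), but the conclusion you draw from it is correct.
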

\begin{proof}
    The statement above follows from \cite[Theorem 2]{Lang}. 
\end{proof}

\begin{corollary}\label{langcor}
    Let $k$ be a finite field and $G$ be a connected smooth algebraic group over $k$. Let $\mathfrak{l}/k$ be a finite extension, then, 
    \[H^1(\op{Gal}(\mathfrak{l}/k), G(\mathfrak{l}))=0.\]
\end{corollary}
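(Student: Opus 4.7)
The plan is to deduce the statement from Lang's theorem (Theorem \ref{langthm}) via the inflation-restriction exact sequence. Since $k$ is finite and $\mathfrak{l}/k$ is a finite extension, $\mathfrak{l}/k$ is automatically Galois, and we may identify $\op{Gal}(\mathfrak{l}/k)$ with the quotient of $\op{Gal}(\bar{k}/k)$ by the normal subgroup $\op{Gal}(\bar{k}/\mathfrak{l})$. Setting $M := G(\bar{k})$, its fixed points under $\op{Gal}(\bar{k}/\mathfrak{l})$ are exactly $G(\mathfrak{l})$, since $G$ is defined over $k$ and its $\mathfrak{l}$-rational points coincide with the $\op{Gal}(\bar{k}/\mathfrak{l})$-invariants of $G(\bar{k})$.

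Applying the five-term inflation-restriction sequence in non-abelian cohomology (which is valid in low degree even when $G(\bar{k})$ is non-abelian, and certainly in our setup), I obtain the exact sequence
\[
1 \longrightarrow H^1\bigl(\op{Gal}(\mathfrak{l}/k),\, G(\mathfrak{l})\bigr) \xrightarrow{\;\op{inf}\;} H^1\bigl(\op{Gal}(\bar{k}/k),\, G(\bar{k})\bigr) \xrightarrow{\;\op{res}\;} H^1\bigl(\op{Gal}(\bar{k}/\mathfrak{l}),\, G(\bar{k})\bigr).
\]
By Theorem \ref{langthm}, the middle term vanishes (and likewise the right-hand term, applied to $G_{/\mathfrak{l}}$, though this is not needed). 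The injectivity of the inflation map then forces the left-hand term to be trivial as well.

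There is no serious obstacle here: the only small point to check is that the inflation-restriction sequence applies at the level of pointed sets for a possibly non-abelian coefficient module, but this is a standard fact (see for instance Serre, \emph{Galois Cohomology}, I.5.8). Hence
\[
H^1\bigl(\op{Gal}(\mathfrak{l}/k),\, G(\mathfrak{l})\bigr) \;=\; 0,
\]
as claimed.
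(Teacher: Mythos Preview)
Your proof is correct and follows essentially the same approach as the paper: both use the injectivity of the inflation map $H^1(\op{Gal}(\mathfrak{l}/k), G(\mathfrak{l})) \hookrightarrow H^1(\op{Gal}(\bar{k}/k), G(\bar{k}))$ together with Lang's theorem to conclude. Your version supplies a few more details (the identification of $G(\mathfrak{l})$ with the invariants, the non-abelian inflation--restriction reference), but the argument is the same.
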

\begin{proof}
    The inflation map is an injection \[H^1(\op{Gal}(\mathfrak{l}/k), G(\mathfrak{l}))\hookrightarrow H^1\left(\op{Gal}(\bar{k}/k), G(\bar{k})\right)\] and thus $H^1(\op{Gal}(\mathfrak{l}/k), G(\mathfrak{l}))=0$ by Theorem \ref{langthm}.
\end{proof}

\noindent For a prime $w$ of $L$ such that $w \mid v$, let \[W_v:=\op{image}\left\{H^1(L_w/K_v, E(L_w))\xrightarrow{\op{inf}} H^1(K_v, E) \right\}.\] From the inflation-restriction sequence, it is easy to see that 
\begin{equation}\label{ker gamma_v contained in W_v[p]}
\op{ker}\gamma_v\subseteq  W_v[p].
\end{equation} 
Since the inflation map is always injective, we simply identify $W_v$ with $H^1(L_w/K_v, E(L_w))$.\\

\noindent Let $k_w$ (resp. $k_v$) denote the residue field of $L$ at $w$ (resp. $K$ at $v$). Let $\widetilde{E}$ denote the reduction of $E$ at $w$ and $E_0(L_w)\subseteq E(L_w)$ denote the subgroup of points with non-singular reduction. We note that if $w$ is a prime of good reduction, then $E_0=E$. We set $E_1(L_w)$ to denote the kernel of the reduction map
\begin{equation}\label{ses of modules}E_0(L_w)\rightarrow \widetilde{E}(k_w).\end{equation} 
Setting $G_w:=\op{Gal}(L_w/K_v)$, we have an exact sequence of $G_w$-modules
\begin{equation}\label{ses of G_w mods}0\rightarrow E_1(L_w)\rightarrow E_0(L_w)\rightarrow \widetilde{E}(k_w)\rightarrow 0, \end{equation}
where $\widetilde{E}(k_w)$ is unramified.
\begin{proposition}\label{local computations propn}
    Assume that $G\simeq \Z/p^n \Z$ for some $n\in \Z_{\geq 1}$ and that $v\nmid p$ is a prime at which $E$ has good reduction. Let $w$ be any prime of $L$ that lies above $v$. Then the following assertions hold.
    \begin{enumerate}
        \item[(i)]\label{p1 of local comp ramified case} If $v$ is totally ramified in $L$, then
        \[\op{dim}_{\F_p} \left(\op{ker}\gamma_v\right) \leq \dim_{\F_p} \left(\widetilde{E}(k_v)[p]\right).\]
        \item[(ii)]\label{p2 of local comp inert case} If $v$ is unramified and inert in $L$, then $\gamma_v$ is injective. 
        \item[(iii)]\label{p3 local computations propn} If $v$ is ramified in $L$ and $\widetilde{E}(k_v)[p]=0$, then $\gamma_v$ is injective.
    \end{enumerate}
\end{proposition}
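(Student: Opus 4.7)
The starting point is the inclusion $\op{ker}\gamma_v\subseteq W_v[p]$ from \eqref{ker gamma_v contained in W_v[p]}, together with the identification $W_v\cong H^1(G_w, E(L_w))$ where $G_w=\op{Gal}(L_w/K_v)$. The plan is to replace $E(L_w)$ by $\widetilde{E}(k_w)$ using the hypothesis $v\nmid p$, then analyze the cohomology of $\widetilde{E}(k_w)$ via inflation-restriction with respect to the inertia subgroup $I_w\subseteq G_w$, and finally specialize to the three cases.

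First I would reduce to the reduction. Because $v$ is a prime of good reduction, the exact sequence \eqref{ses of G_w mods} takes the form $0\to E_1(L_w)\to E(L_w)\to \widetilde{E}(k_w)\to 0$. Since $v\nmid p$, the formal group $E_1(L_w)$ is uniquely divisible by $p$, so Lemma \ref{vanishing cohomology prop} gives $H^i(G_w,E_1(L_w))=0$ for all $i\ge 1$. The long exact cohomology sequence then yields
\[
W_v\;\cong\; H^1(G_w,\widetilde{E}(k_w)).
\]
Next, because $\widetilde{E}(k_w)$ is unramified, the inertia group $I_w\subseteq G_w$ acts trivially, so $\widetilde{E}(k_w)^{I_w}=\widetilde{E}(k_w)$. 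The inflation-restriction sequence together with Corollary \ref{langcor} (applied to the connected smooth group $\widetilde{E}$ over the finite field $k_v$ and the residue extension $k_w/k_v$) shows that $H^1(G_w/I_w,\widetilde{E}(k_w))=0$, hence
\[
W_v\;\hookrightarrow\; H^1(I_w,\widetilde{E}(k_w))^{G_w/I_w} \;=\; \op{Hom}(I_w,\widetilde{E}(k_w))^{G_w/I_w}.
\]

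With this setup, the three assertions follow by case analysis. For (ii), unramifiedness means $I_w=1$, so $W_v=0$ and $\gamma_v$ is injective. For (i), total ramification means $I_w=G_w$ and $k_w=k_v$, so $W_v\cong \op{Hom}(G_w,\widetilde{E}(k_v))$; taking $p$-torsion and using $G_w\simeq \Z/p^n\Z$ yields $W_v[p]\cong \op{Hom}(G_w,\widetilde{E}(k_v)[p])\cong \widetilde{E}(k_v)[p]$, giving the desired dimension bound. For (iii), the key step is to upgrade the hypothesis $\widetilde{E}(k_v)[p]=0$ to $\widetilde{E}(k_w)[p]=0$: the quotient $G_w/I_w$ is a $p$-group (as a subquotient of $\Z/p^n\Z$) acting on $\widetilde{E}(k_w)[p]$ with fixed points $\widetilde{E}(k_v)[p]=0$, so Lemma \ref{NSW lemma} forces $\widetilde{E}(k_w)[p]=0$; consequently $W_v[p]\hookrightarrow \op{Hom}(I_w,\widetilde{E}(k_w)[p])=0$ and $\gamma_v$ is injective.

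The main subtlety is less in any single case than in setting up the right framework: identifying $W_v$ with cohomology of the reduction requires the unique $p$-divisibility of $E_1(L_w)$, which is precisely why the hypothesis $v\nmid p$ enters, and part (iii) requires one additional step beyond the obvious cohomology computation, namely propagating the triviality of $p$-torsion from $k_v$ to $k_w$ via the $p$-group $G_w/I_w$. Once these two observations are in place, the rest of the argument is a direct unwinding of inflation-restriction and cyclic-group cohomology with trivial coefficients.
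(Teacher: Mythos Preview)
Your proof is correct and uses the same three ingredients as the paper: the reduction isomorphism $H^1(G_w,E(L_w))\cong H^1(G_w,\widetilde{E}(k_w))$ coming from unique $p$-divisibility of $E_1(L_w)$, Lang's theorem (Corollary~\ref{langcor}), and Lemma~\ref{NSW lemma} to propagate $\widetilde{E}(k_v)[p]=0$ to $\widetilde{E}(k_w)[p]=0$.

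The organizational difference is that you set up a single inflation--restriction sequence with respect to $I_w\subseteq G_w$ and then specialize, whereas the paper argues case by case without this common framework: for (ii) it applies Lang's theorem directly to $G_w\cong\op{Gal}(k_w/k_v)$, and for (iii) it observes that once $\widetilde{E}(k_w)[p]=0$ the module $\widetilde{E}(k_w)$ is uniquely $p$-divisible, so $H^1(G_w,\widetilde{E}(k_w))$ vanishes outright by Lemma~\ref{vanishing cohomology prop}. Your route has the aesthetic advantage that Lang's theorem is invoked exactly once, in the inflation term, and all three cases fall out of the same injection $W_v\hookrightarrow \op{Hom}(I_w,\widetilde{E}(k_w))^{G_w/I_w}$; the paper's route for (iii) is marginally shorter once $\widetilde{E}(k_w)[p]=0$ is known, since it kills the whole group rather than just its $p$-torsion. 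Both are entirely equivalent in substance.
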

\begin{proof}
We first consider the case when $v$ is totally ramified in $L$, and identify $G_w$ with $G$. As $E$ has good reduction at $w$, we find that $E(L_w)=E_0(L_w)$. Since $v\nmid p$, the multiplication by $p$ map acts as an isomorphism on $E_1(L_w)$ and thus, it follows from Lemma \ref{vanishing cohomology prop} that $H^i(G_w, E_1(L_w))=0$ for $i\geq 1$. Thus, from the long exact sequence in cohomology associated to \eqref{ses of modules}, we get the following isomorphism induced by the reduction map
    \begin{equation}\label{redn iso} H^1(G_w, E(L_w))\xrightarrow{\sim} H^1(G_w, \widetilde{E}(k_w)).\end{equation}  
    By equation \eqref{ker gamma_v contained in W_v[p]}, we deduce that $$\dim_{\F_p}(\op{ker}\gamma_v) \leq \dim_{\F_p} W_v[p].$$ Since $L_w/K_v$ is totally ramified, $k_w=k_v$. Therefore, the action of $G_w$ on $\widetilde{E}(k_w)$ is trivial and we find that
    \[\begin{split} \dim_{\F_p} W_v[p]=&\dim_{\F_p} \op{Hom}(G_w, \widetilde{E}(k_w))[p] \\ 
    = &\dim_{\F_p} \op{Hom}(\Z/p^n\Z, \widetilde{E}(k_w)[p]) \\
    =&\dim_{\F_p} \left(\widetilde{E}(k_v)[p]\right).\end{split}\]
    This proves part $(i)$ of the Proposition. \\

    \par Next, we consider the case when $v$ is unramified and inert in $L$. Then we identify $G_w$ with $\op{Gal}(k_w/k_v)$ and from the isomorphism in equation \eqref{redn iso}, we have that 
    \[H^1(G_w, E(L_w))\simeq H^1(k_w/k_v, \widetilde{E}(k_w)).\] The vanishing of $H^1(k_w/k_v, \widetilde{E}(k_w))$ follows from Corollary \ref{langcor}, which is a consequence of Lang's theorem. This proves part $(ii)$ of the Proposition. \\

    In order to prove part (iii), it follows from Lemma \ref{NSW lemma} that $\widetilde{E}(k_{w})[p]=0$. Therefore $\widetilde{E}(k_{w})$ is uniquely $p$-divisible and hence $H^1(G_w, \widetilde{E}(k_{w}))[p]=0$. The isomorphism \eqref{redn iso} does not require $v$ to be totally ramified in $L$ and we find that $H^1(G_w, E(L_w))[p]=0$. Therefore, the kernel of $\gamma_v$ is zero, concluding the proof of part (iii). We thank the referee for suggesting this argument to prove part (iii).
\end{proof}

\begin{proposition}\label{cor 2.7}
    With respect to above notation, assume that the following assertions hold.
    \begin{enumerate}
        \item[(i)] The Selmer group $\op{Sel}_p(E/K)$ is equal to $0$.
        \item[(ii)] All primes of $K$ that lie above $p$ and the primes of bad reduction for $E$ are completely split in $L$.
        \item[(iii)]\label{part 3 E(k_v)[p]} All primes $v$ of $K$ that ramify in $L$ satisfy $\widetilde{E}(k_v)[p]=0$.
    \end{enumerate}
Then, $\op{Sel}_p(E/L)=0$. 
\end{proposition}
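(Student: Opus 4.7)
The plan is to deduce this proposition directly from Proposition \ref{vanishing of selmer propn}(ii), combined with Proposition \ref{local computations propn}(iii) and Remark \ref{v-split-injective-remark}; essentially all the serious work has already been done in Section \ref{s 2}, and what remains is to correctly match each prime of $S$ with the appropriate earlier statement. Hypothesis (i) gives $\op{Sel}_p(E/K) = 0$, so by Proposition \ref{vanishing of selmer propn}(ii) it suffices to show that the restriction map $\gamma_v$ defined in \eqref{def of gamma_v} is injective for every $v \in S$.

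To do this, I would partition $S$ according to the three bullet points of Definition \ref{choice of S}. For those $v \in S$ satisfying $v \mid p$ or having bad reduction for $E$, hypothesis (ii) says that $v$ splits completely in $L$; then for every prime $w$ of $L$ above $v$ the completion $L_w$ equals $K_v$ and the component of $\gamma_v$ at $w$ is the identity, so $\gamma_v$ is injective by Remark \ref{v-split-injective-remark}. The remaining primes in $S$ are those that ramify in $L$. For such $v$, the conditions in hypothesis (ii) force $v \nmid p$ and $v$ to be of good reduction for $E$, since otherwise $v$ would be completely split in $L$ and hence unramified. Thus hypothesis (iii) applies and gives $\widetilde{E}(k_v)[p] = 0$, so part (iii) of Proposition \ref{local computations propn} (with $G \simeq \Z/p^n\Z$ as standing assumption) yields $\op{ker}\gamma_v = 0$.

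Assembling the two cases covers all $v \in S$, so $\gamma_v$ is injective on every prime in $S$, and Proposition \ref{vanishing of selmer propn}(ii) forces $\op{Sel}_p(E/L) = 0$. I do not anticipate any genuine obstacle: the work is bookkeeping to confirm that each prime in $S$ falls into exactly one of the two regimes (completely split versus ramified/good/prime-to-$p$) and that the corresponding earlier lemma is applicable without modification.
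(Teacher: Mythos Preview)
Your proposal is correct and follows essentially the same route as the paper: reduce to injectivity of $\gamma_v$ for $v\in S$ via Proposition \ref{vanishing of selmer propn}(ii), handle primes above $p$ and of bad reduction by the complete-splitting hypothesis and Remark \ref{v-split-injective-remark}, and handle the ramified primes via Proposition \ref{local computations propn}(iii). Your explicit observation that a ramified $v$ must satisfy $v\nmid p$ and have good reduction (since otherwise hypothesis (ii) would force it to be split) is a useful clarification that the paper leaves implicit.
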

\begin{proof}
    Let $S$ be as in Definition \ref{choice of S}. Then it follows from Proposition \ref{vanishing of selmer propn} $(ii)$ that $\op{Sel}_p(E/L)=0$ if and only if $\gamma_v$ is injective for all $v\in S$. For $v\in S$, if $v$ divides $p$ or $v$ is a prime of bad reduction of $E$, then $v$ is assumed to be split completely in $L$, according to hypothesis $(ii)$. By Remark \ref{v-split-injective-remark}, $\gamma_v$ is injective. The only other primes $v\in S$ are those that ramify in $L$. According to our assumption $(iii)$, $\widetilde{E}(k_v)[p] = 0$ for such primes. Then, it follows from Proposition \ref{local computations propn} part $(iii)$ that $\gamma_v$ is injective. This concludes our proof.
    \end{proof}
 \par Recall that $E[p]$ is the $p$-torsion submodule of $E(\overline{\Q})$. Let $\rho_{E, p}:\op{G}_{\Q}\rightarrow \op{GL}_2(\Z/p\Z)$ be the Galois representation on $E[p]$. Let $\Q(E[p])$ be the Galois extension of $\Q$ \emph{cut out} by $\rho_{E,p}$. In other words, it is defined to be the field $\overline{\Q}^{\op{ker}\rho_{E,p}}$. We note that the Galois group $\op{Gal}(\Q(E[p])/\Q)$ can be identified with the image of $\rho_{E,p}$. Given a prime $\ell$ at which $E$ has good reduction, set $\op{G}_\ell:=\op{Gal}(\overline{\Q}_\ell/\Q_\ell)$ and take $\op{I}_\ell\subset \op{G}_\ell$ to be the inertia subgroup, defined to be the kernel of the mod-$\ell$ reduction map 
    \[\op{G}_\ell\rightarrow \op{G}_{\F_\ell}.\] Let $\sigma_\ell\in \op{G}_\ell/\op{I}_\ell$ be the Frobenius automorphism. Denote by $a_\ell(E)$ the trace of the Frobenius element at $\ell$. This is well defined since $\ell$ is a prime of good reduction for $E$. Moreover, the following relations hold (modulo $p$)
    \begin{equation}\label{trace-det-Frob}
        \op{trace} \rho_{E,p}(\sigma_\ell)=a_\ell(E)\text{ and }\op{det} \rho_{E, p}(\sigma_\ell)=\ell.
    \end{equation}
    We set $K(E[p])$ to denote the composite $K\cdot \Q(E[p])$. We introduce a new set of primes which will play a significant role in our results. 
\begin{definition}\label{definition of TE}
    Let $E$ be an elliptic curve over $\Q$ and let $\mathfrak{T}_E$ be the set of prime numbers $\ell$ satisfying the following conditions
    \begin{enumerate}
        \item[(a)] $\ell\neq p$, 
        \item[(b)] $\ell$ is a prime at which $E$ has good reduction, 
        \item[(c)] $\ell$ is completely split in $K(\mu_p)$,
        \item[(d)] $\op{trace}\left(\rho_{E,p}(\sigma_\ell)\right)\neq 2$. 
    \end{enumerate}
\end{definition}
We note that all but finitely many primes $\ell$ satisfy the first two of the above conditions. Given $\ell \in \mathfrak{T}_E$, the condition $(c)$ above implies that $\ell \equiv 1 \bmod p$. It follows from \eqref{trace-det-Frob} that $\rho_{E,p}(\sigma_{\ell}) \in \op{SL}_2(\F_p)$ and from Definition \ref{definition of TE} (d) that $a_{\ell}(E) \not\equiv 2 \bmod p$. Since 
$$a_{\ell}(E) = \ell+1 \,- \, \, \# \widetilde{E}(\F_{\ell}) \qquad \text{and} \qquad \ell \equiv 1 \bmod p,$$ 
we have that $\widetilde{E}(\F_{\ell})[p]=0$ for $\ell \in \mathfrak{T}_E$. The result below gives the relationship between the set of primes $\mathfrak{T}_E$ and the primes $v$ that satisfy the condition $\widetilde{E}(k_v)[p]=0$ from part $(iii)$ of Proposition \ref{cor 2.7}. 

\begin{lemma}\label{T_E lemma}
    Let $\ell$ be a prime in $\mathfrak{T}_E$. Then $\widetilde{E}(k_v)[p]=0$ for any of the primes of $K$ such that $v|\ell$. 
\end{lemma}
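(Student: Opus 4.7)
The plan is to observe that the condition $\ell\in \mathfrak{T}_E$ already forces $\widetilde{E}(\F_\ell)[p]=0$ (this is essentially the computation the authors sketch in the paragraph right before the lemma), and then to promote this to residue fields $k_v$ for $v\mid \ell$ in $K$ by using that $\ell$ splits completely in $K$.

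First I would unpack Definition~\ref{definition of TE}. Since $\ell$ splits completely in $K(\mu_p)$ and $K\subseteq K(\mu_p)$, the prime $\ell$ splits completely in $K$. Consequently, for every prime $v$ of $K$ above $\ell$, the residue field $k_v$ is isomorphic to $\F_\ell$, and the reduction $\widetilde{E}$ at $v$ is obtained by base change from the reduction of $E$ at $\ell$. Thus it suffices to show that $\widetilde{E}(\F_\ell)[p]=0$.

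Next, I would combine the congruence $\ell\equiv 1 \pmod p$ (which also follows from complete splitting in $\Q(\mu_p)\subseteq K(\mu_p)$) with the identity
\[
\#\widetilde{E}(\F_\ell) \; = \; \ell+1-a_\ell(E).
\]
Reducing modulo $p$, this gives
\[
\#\widetilde{E}(\F_\ell) \; \equiv \; 2 - a_\ell(E) \pmod p.
\]
By condition~(d) of Definition~\ref{definition of TE}, $a_\ell(E) \not\equiv 2 \pmod p$, so $p \nmid \#\widetilde{E}(\F_\ell)$. Hence $\widetilde{E}(\F_\ell)[p]=0$, and pulling back along the isomorphism $k_v\simeq \F_\ell$ yields $\widetilde{E}(k_v)[p]=0$, as required.

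There is no real obstacle here; the argument is essentially a bookkeeping exercise that makes explicit the computation already outlined in the paragraph preceding the lemma, the only extra ingredient being the transfer from $\F_\ell$ to $k_v$ via complete splitting of $\ell$ in $K$.
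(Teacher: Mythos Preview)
Your proof is correct and follows essentially the same route as the paper's: use complete splitting of $\ell$ in $K$ to identify $k_v$ with $\F_\ell$, and then combine $\ell\equiv 1\pmod p$ with $\#\widetilde{E}(\F_\ell)=\ell+1-a_\ell(E)$ and $a_\ell(E)\not\equiv 2\pmod p$ to conclude $\widetilde{E}(\F_\ell)[p]=0$. This is exactly the argument outlined in the paragraph preceding the lemma and invoked in the paper's one-line proof.
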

\begin{proof}
    Since $\ell$ splits in $K$, we find that $\widetilde{E}(k_v)=\widetilde{E}(\F_\ell)$. That $\widetilde{E}(\F_\ell)[p] = 0$ follows from the discussion above.   
\end{proof}

\begin{lemma}\label{density lemma}
    Suppose that $\rho_{E, p}$ is surjective and that \[K(\mu_p)\cap \Q(E[p])=\Q(\mu_p).\] Then the natural density of $\mathfrak{T}_E$ is equal to 
    \[\mathfrak{d}(\mathfrak{T}_E)=\frac{p^2 - p - 1}{[K(\mu_p):\Q(\mu_p)]\,(p^2-1)(p-1)}.\]
\end{lemma}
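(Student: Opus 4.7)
\medskip

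\noindent\textbf{Proof plan for Lemma \ref{density lemma}.}

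The plan is to compute this density via the Chebotarev density theorem applied to the extension $K(E[p])/\Q$. Conditions (a) and (b) in Definition \ref{definition of TE} exclude only finitely many primes and therefore do not affect natural density, so I only need to control (c) and (d), both of which are conditions on the Frobenius conjugacy class $\sigma_\ell \in \op{Gal}(K(E[p])/\Q)$ at an unramified prime $\ell$. The surjectivity of $\rho_{E,p}$ identifies $\op{Gal}(\Q(E[p])/\Q)$ with $\op{GL}_2(\F_p)$ and $\op{Gal}(\Q(E[p])/\Q(\mu_p))$ with $\op{SL}_2(\F_p)$. Combined with the assumption $K(\mu_p)\cap \Q(E[p])=\Q(\mu_p)$, standard Galois theory yields a fibre product description
\[
G:=\op{Gal}(K(E[p])/\Q)\;\simeq\;\op{Gal}(K(\mu_p)/\Q)\times_{\op{Gal}(\Q(\mu_p)/\Q)}\op{GL}_2(\F_p),
\]
so that $|G|=[K(\mu_p):\Q(\mu_p)]\cdot(p-1)\cdot p(p^2-1)$, and the normal subgroup $H:=\op{Gal}(K(E[p])/K(\mu_p))$ is identified, via the second projection, with $\op{SL}_2(\F_p)$.

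Next, I would translate conditions (c) and (d) into a subset $T\subseteq G$. Since $K(\mu_p)/\Q$ is Galois, $H$ is normal in $G$, so the condition ``$\ell$ splits completely in $K(\mu_p)$'' is equivalent to $\sigma_\ell\in H$, which is a union of $G$-conjugacy classes. Under the identification $H\simeq \op{SL}_2(\F_p)$, the $G$-conjugation action on $H$ factors through the $\op{GL}_2(\F_p)$-conjugation action on $\op{SL}_2(\F_p)$; in particular it preserves the trace function. Therefore the set
\[
T\;:=\;\{h\in H : \op{trace}(h)\neq 2\}
\]
is also a union of $G$-conjugacy classes, and Chebotarev gives $\mathfrak{d}(\mathfrak{T}_E)=|T|/|G|$.

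It remains to count elements of $\op{SL}_2(\F_p)$ with trace $2$. Writing such an element as $I+N$ and applying Cayley--Hamilton, the trace and determinant conditions force $N^2=0$. The zero matrix contributes $I$, and rank-one nilpotents of the form $N=vw^{T}$ with $w^{T}v=0$ are parametrized (after accounting for the scaling $(v,w)\mapsto(cv,c^{-1}w)$) by $p^2-1$ matrices, giving a total of $p^2$ trace-$2$ elements. Hence
\[
|T|\;=\;|\op{SL}_2(\F_p)|-p^2\;=\;p(p^2-1)-p^2\;=\;p(p^2-p-1).
\]
Dividing by $|G|=[K(\mu_p):\Q(\mu_p)]\cdot p(p-1)(p^2-1)$ and simplifying yields exactly the claimed density.

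The main point to check carefully is the fibre-product description of $G$: once one has a correct identification $H\simeq \op{SL}_2(\F_p)$ and sees that $G$ acts on $H$ through $\op{GL}_2(\F_p)$-conjugation, everything else is the routine Chebotarev bookkeeping together with the elementary count of trace-$2$ elements in $\op{SL}_2(\F_p)$. No additional analytic input beyond Chebotarev is needed.
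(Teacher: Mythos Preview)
Your proposal is correct and follows essentially the same approach as the paper: apply Chebotarev to $K(E[p])/\Q$, identify the relevant Frobenius set with the elements of $\op{SL}_2(\F_p)$ of trace $\neq 2$, and count. The only cosmetic differences are that you package the Galois group as a fibre product and count trace-$2$ elements via the bijection $M\mapsto M-I$ with $2\times 2$ nilpotents, whereas the paper enumerates the triples $(a,b,c)$ with $bc=-(a-1)^2$ directly; both counts give $p^2$ and hence the same density.
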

\begin{proof}
Let $\theta: \op{Gal}(K(E[p])/K) \rightarrow \rho_{E,p}(\op{G}_K)$ be the isomorphism, mapping $\sigma$ to $\rho_{E,p}(\sigma)$. Let $\mathcal{S}$ be the subset of matrices $A$ in $\rho_{E,p}(\op{G}_K)$ with $\op{det} A = 1$ and $\op{trace} A \neq 2$. Let $\ell$ be a prime of good reduction for $E$ with $\ell \neq p$ that is unramified in $K$. Then $\ell$ is unramified in $K(E[p])$. Moreover, $\sigma_{\ell} \in \theta^{-1}( \mathcal{S})$ if and only if $\ell \in \mathfrak{T}_{E}$. Indeed, if $\sigma_{\ell} \in \theta^{-1}(\mathcal{S})$, then $\sigma_{\ell} \in \op{Gal}(K(E[p])/K)$. In fact, $\sigma_{\ell} \in \op{Gal}(K(E[p])/K(\mu_p))$ since $\op{det}(\rho_{E,p}(\sigma_{\ell}))=1$. Therefore $\ell$ splits in $K(\mu_p)$, and $\op{trace}(\rho_{E,p}(\sigma_{\ell})) \neq 2$ by the definition of $\mathcal{S}$, establishing the forward implication. For the converse, if $\ell \in \mathfrak{T}_E$, then it can be argued similarly that $\sigma_{\ell} \in \theta^{-1}(\mathcal{S})$.
\begin{center}
\begin{tikzpicture}

    \node (Q1) at (0,0) {$\Q$};
    \node (Q2) at (2,2) {$\Q(\mu_p)$};
    \node (Q3) at (4,4) {$\Q(E[p])$};
    \node (Q4) at (-2,2) {$K$};
    \node (Q5) at (0,4) {$K(\mu_p)$};
    \node (Q6) at (2,6) {$K(E[p])$};

    \draw (Q1)--(Q2) node [pos=0.7, below,inner sep=0.25cm] {};
    \draw (Q1)--(Q4) node [pos=0.7, below,inner sep=0.25cm] {};
    \draw (Q2)--(Q3);
    \draw (Q2)--(Q5);
    \draw (Q3)--(Q6);
    \draw (Q4)--(Q5);
    \draw (Q5)--(Q6);
    \end{tikzpicture}
\end{center}
If $\# \mathcal{S}>0$, then by the Chebotarev density theorem, the set of primes that satisfy the above conditions is infinite, with natural density $\left(\frac{\# \mathcal{S}} {[K(E[p]):\Q]}\right)$. We now count the number of matrices $A=\mtx{a}{b}{c}{2-a}\in \op{SL}_2(\F_p)$, that is, the number of triples $(a, b, c)\in \F_p^3$ such that $bc=a(2-a)-1=-a^2+2a-1=-(a-1)^2$. We consider two cases. Suppose $a=1$. Then, either $b$ or $c$ is $0$. Thus, the total number of triples when $a=1$ is $2p-1$. Next assume that $a\neq 1$. Then, $b$ and $c$ are both non-zero, and $c$ is determined by $a$ and $b$. Hence, the total number of triples for which $a\neq 1$ is equal to $(p-1)^2$. Hence, we have  
\begin{align*}
    \# \mathcal{S} & =  \, \# \op{SL}_2(\F_p)-(p-1)^2-(2p-1) = p(p^2-1)-(p-1)^2-(2p-1) \\
    & = p^3 - p^2 - p.
\end{align*}
Hence, we obtain that
\begin{align*}
    \mathfrak{d}(\mathfrak{T}_E) & = \frac{\# \mathcal{S}}{[K(E[p]):K]} = \frac{\# \mathcal{S}}{[K(\mu_p):\Q(\mu_p)][\Q(E[p]):\Q]} = \frac{\#\mathcal{S}}{[K(\mu_p):\Q(\mu_p)] \, \# \op{GL}_2(\F_p)} \\
    & = \frac{p^3 - p^2 - p}{[K(\mu_p):\Q(\mu_p)]\,(p^2-1)(p^2-p)} = \frac{p^2 - p - 1}{[K(\mu_p):\Q(\mu_p)]\,(p^2-1)(p-1)}.
\end{align*}
\end{proof}

\bigskip 

\section{\bf Stability in extensions with Galois group $B\ltimes \Z/p\Z$}\label{s 3}

\par In this section, we fix $p$ to be an odd prime number and $K$ to be a Galois extension of $\Q$, with $B:= \op{Gal}(K/\Q)$. Denote by $\omega$ the mod-$p$ cyclotomic character. Given an $\F_p[\op{G}_{\Q}]$-module $M$ and an integer $k$, we set $M(k)$ to denote the $k$-fold Tate-twist $M\otimes_{\F_p} \F_p(\omega^k)$. Let $E$ be an elliptic curve over $\Q$. Throughout this section, we make the following assumptions
\begin{itemize}
    \item $\op{Sel}_p(E/K)=0$,
    \item $B$ is a finite abelian group, 
    \item $|B|$ is coprime to $p$,
    \item $\chi_0$ is nontrivial mod $p$,
    \item $K$ contains $\Q(\mu_p)$, 
    \item $K\cap \Q(E[p])=\Q(\mu_p)$.
    \item The Galois representation 
    $\rho_{E, p}:\op{G}_{\Q}\rightarrow \op{GL}_2(\F_p)$ is surjective.
\end{itemize}

Let $\G$ be a finite group that is a semi-direct product $\G =B \ltimes T$, where $T \simeq \Z/p\Z$. The more general case when $T \simeq \Z/p^n \Z$ will be the subject of the next section. We consider $n=1$ separately since in this case, the results are stronger and much easier to state. \\

\noindent We call $L/K/\Q$ a $(\G, K)$-extension if 
\begin{itemize}
    \item $L$ is a Galois extension of $\Q$ which contains $K$, 
    \item there is an isomorphism $\phi: \op{Gal}(L/\Q)\xrightarrow{\sim} \G$ which restricts to an isomorphism $\op{Gal}(L/K)\xrightarrow{\sim} T$.
\end{itemize}
We identify the Galois group $\op{Gal}(L/\Q)$ with $\G$ and find that $L^T=K$. We note that the condition $\op{Sel}_p(E/K)=0$ is equivalent to requiring that $\op{rank}E(K)=0$, $E(K)[p]=0$ and $\Sh(E/K)[p]=0$. \\

\par Let $\chi_0: B\rightarrow \op{Aut}(T)\xrightarrow{\sim}\F_p^\times$ be the character defined by 
\[\chi_0(\sigma):=\sigma h \sigma^{-1}.\] The group $\G$ is determined by $\chi_0$ and we write $\G=B \ltimes_{\chi_0} T$ to emphasize this. We take $\chi: \op{G}_{\Q}\rightarrow \F_p^\times$ to be the composite map
\[\op{G}_{\Q}\twoheadrightarrow \op{Gal}(K/\Q)\xrightarrow{\sim} B\xrightarrow{\chi_0} \F_p^\times,\] where the first map above  is the natural quotient map.\\

Let $\F_p(\chi)$ be the $\op{G}_{\Q}$-module on which $\sigma\in \op{G}_{\Q}$, the absolute Galois group, acts as follows: $\sigma\cdot x:=\chi(\sigma) x$, where $\sigma\in \op{G}_{\Q}$ and $x\in \F_p$. Let $Z^1(\op{G}_{\Q}, \F_p(\chi))$ be the group of crossed homomorphisms, $f: \op{G}_{\Q}\rightarrow \F_p(\chi)$. Given a finite set of primes $\Sigma$ containing the primes at which $\chi$ is ramified, let $\Q_\Sigma$ be the maximal algebraic extension of $\Q$ in which the primes $\ell\notin \Sigma$ are unramified. Set $\op{G}_{\Sigma}:=\op{Gal}(\Q_\Sigma/\Q)$ and $Z^1(\Q_\Sigma/\Q, \F_p(\chi)):=Z^1(\op{G}_\Sigma, \F_p(\chi))$. Let $f\in H^1(\Q_\Sigma/\Q, \F_p(\chi))$ and consider the restriction $$g:=f  \big|_{\op{Gal}(\Q_\Sigma/K)}:\op{Gal}(\Q_\Sigma/K)\rightarrow \F_p.$$ 
Since the character $\chi$ is trivial when restricted to $\op{G}_{K}$, the restriction $g$ is a homomorphism.\\
\begin{definition}\label{defn of Kf}Let $K_f$ be the extension of $K$ \emph{cut out} by $f$, i.e., $K_f:=\left(\Q_\Sigma\right)^{\op{ker}g}$.
\end{definition}
The homomorphism $g$ satisfies the relation 
\[g(\sigma x \sigma^{-1})=\chi(\sigma)g(x),\] for $x\in \op{Gal}(\Q_\Sigma/K)$ and $\sigma\in \op{Gal}(\Q_\Sigma/\Q)$. Thus, we find that $\op{ker}(g)$ is a normal subgroup of $\op{Gal}(\Q_\Sigma/\Q)$, and hence, $K_f$ is a Galois extension of $K$. Moreover, the Galois group $\op{Gal}(K_f/\Q)$ is isomorphic to $\G$ and the extension $K_f$ is unramified at all primes $\ell\notin \Sigma$. Since $B$ has order coprime to $p$, $$H^i(B, \F_p(\chi)) = 0 \qquad \text{for} \qquad i > 0. $$ Hence, the restriction map, 
\[\op{res}_K: H^1(\Q_\Sigma/\Q, \F_p(\chi))\rightarrow \op{Hom}\left(\op{Gal}(\Q_\Sigma/K), \F_p(\chi)\right)^B\] is an isomorphism. Therefore, distinct cohomology classes $f, f'$ give rise to distinct homomorphisms $g, g'$. Thus, we find that $K_f=K_{f'}$ if and only if $g=c g'$ for some scalar $c\in \F_p^\times$. This latter condition is equivalent to $f=c f'$.\\

\par Given a finite dimensional $\F_p$-vector space $V$, let $\mathbb{P}(V)$ denote the space of all $\F_p$-lines in $V$. In other words, $\mathbb{P}(V)$ is the set of all equivalence classes $\left(V\backslash \{0\}\right)/\sim$, where $v\sim v'$ if and only if there is a constant $c\in \F_p^\times$ such that $v=cv'$. 
\begin{proposition}\label{bijection 2}
    With respect to the notation above, there is a natural bijection
\[\varphi_\Sigma: \mathbb{P}\left(H^1(\Q_\Sigma/\Q, \F_p(\chi))\right)\xrightarrow{\sim} \{L\mid L\text{ is a }(\G,K)\text{ extension of }\Q\text{, unramified outside }\Sigma\},\] defined by taking $\varphi_\Sigma(f)=K_f$.
\end{proposition}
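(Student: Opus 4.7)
The plan is to check that $\varphi_\Sigma$ is well-defined and bijective; injectivity is essentially already in the paragraph preceding the proposition, so the main things to verify are surjectivity and the fact that $K_f$ is a genuine $(\G,K)$-extension unramified outside $\Sigma$.

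First I would invoke the preceding discussion, which shows the restriction map $\op{res}_K$ to be an isomorphism (because $|B|$ is coprime to $p$, so $H^i(B,\F_p(\chi))=0$ for $i\geq 1$), giving a bijection between nonzero classes $f$ and nonzero $B$-equivariant homomorphisms $g:\op{Gal}(\Q_\Sigma/K)\to\F_p(\chi)$. Since $K_f=\Q_\Sigma^{\ker g}$ depends only on $\ker g$, and $g$ takes values in the $1$-dimensional $\F_p$-space $\F_p(\chi)$, one has $\ker g=\ker g'$ iff $g'\in\F_p^\times\cdot g$, equivalently $f'\in\F_p^\times\cdot f$. This forces $\varphi_\Sigma$ to descend to an injection on $\mathbb{P}(H^1(\Q_\Sigma/\Q,\F_p(\chi)))$. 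To check that $K_f$ actually lies in the target set, I would use that $K_f\subseteq\Q_\Sigma$ by construction and that the cocycle relation $g(\sigma x\sigma^{-1})=\chi(\sigma)g(x)$ makes $\ker g$ normal in $\op{G}_\Sigma$, so $K_f/\Q$ is Galois. The exact sequence $1\to\op{Gal}(K_f/K)\to\op{Gal}(K_f/\Q)\to B\to 1$ splits by Schur--Zassenhaus since $|B|$ is coprime to $p$, and the same cocycle identity shows the $B$-action on the order-$p$ normal subgroup $\op{Gal}(K_f/K)$ matches $\chi_0$, yielding an isomorphism $\op{Gal}(K_f/\Q)\xrightarrow{\sim}\G$ that restricts to an isomorphism $\op{Gal}(K_f/K)\xrightarrow{\sim}T$.

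For surjectivity, given a $(\G,K)$-extension $L/K/\Q$ unramified outside $\Sigma$, so $L\subseteq\Q_\Sigma$, I would fix an isomorphism $\phi:\op{Gal}(L/\Q)\xrightarrow{\sim}\G$ from the definition and form the composite
\[g_L\,:\,\op{Gal}(\Q_\Sigma/K)\twoheadrightarrow\op{Gal}(L/K)\xrightarrow{\phi}T\simeq\F_p.\]
Compatibility of $\phi$ with conjugation in $\G$ forces $g_L(\sigma x\sigma^{-1})=\chi(\sigma)g_L(x)$, so $g_L\in\op{Hom}(\op{Gal}(\Q_\Sigma/K),\F_p(\chi))^B$; applying $\op{res}_K^{-1}$ produces a class $f_L\in H^1(\Q_\Sigma/\Q,\F_p(\chi))$ whose restriction to $\op{G}_K$ has kernel $\ker g_L$, hence $K_{f_L}=L$. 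A different choice of $\phi$ or of the identification $T\simeq\F_p$ rescales $g_L$ and $f_L$ by a common element of $\F_p^\times$, so the projective class $[f_L]$ is intrinsic to $L$ and provides the inverse to $\varphi_\Sigma$.

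The main, and rather minor, obstacle will be coherent bookkeeping between the twisted $B$-action on $\F_p(\chi)$ and the conjugation action in $\G=B\ltimes_{\chi_0}T$, together with matching the projective ambiguity in $\mathbb{P}(H^1)$ to the choice of identification $T\simeq\F_p$. All of the substantive content is absorbed into the inflation--restriction isomorphism supplied by the hypothesis $\gcd(|B|,p)=1$, so beyond this bookkeeping no further technical input will be needed.
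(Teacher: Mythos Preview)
Your proposal is correct and follows essentially the same approach as the paper: both use the inflation--restriction isomorphism (from $\gcd(|B|,p)=1$) to pass between $H^1(\Q_\Sigma/\Q,\F_p(\chi))$ and $B$-equivariant homomorphisms, and both construct the inverse $L\mapsto[\psi]$ via the composite $\op{Gal}(\Q_\Sigma/K)\twoheadrightarrow\op{Gal}(L/K)\xrightarrow{\phi}T\simeq\F_p(\chi)$. Your write-up is somewhat more explicit about why $\op{Gal}(K_f/\Q)\simeq\G$ (invoking Schur--Zassenhaus) and about tracking the projective ambiguity, whereas the paper absorbs these into the discussion preceding the proposition and leaves the verification of equivalence of $\psi$'s under different choices of $\phi$ to the reader.
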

\begin{proof}
From the inflation restriction sequence, we identify $\op{Hom}\left(\op{Gal}(\Q_\Sigma/K), \F_p(\chi)\right)^B$ with $H^1(\Q_\Sigma/\Q, \F_p(\chi))$. Given a $(\G,K)$-extension $L/K/\Q$, choose an isomorphism $\phi: \op{Gal}(L/\Q)\xrightarrow{\sim} \G$ which restricts to an isomorphism 
\[\phi_{K}:\op{Gal}(L/K)\xrightarrow{\sim} T.\] This in turn gives rise to a non-zero $B$-equivariant isomorphism 
\[\psi:\op{Hom}\left(\op{Gal}(\Q_\Sigma/K), \F_p(\chi)\right)^B,\] which is defined to be the natural composite
\[\op{Gal}(\Q_\Sigma/K)\twoheadrightarrow \op{Gal}(L/K)\xrightarrow{\phi_K} T \xrightarrow{\sim} \F_p(\chi).\] 
Here, the action of $B$ on $\op{Hom}\left(\op{Gal}(\Q_\Sigma/K), \F_p(\chi)\right)$ is as follows. Given $\sigma\in B=\op{Gal}(K/\Q)$ and $g\in \op{Hom}\left(\op{Gal}(\Q_\Sigma/K), \F_p(\chi)\right)$ let $K_g$ be the field extension of $K$ that is cut out by $g$. Pick a lift $\tilde{\sigma}$ of $\sigma$ to $\op{Gal}(K_f/\Q)$. Then, set 
\[(\sigma g)(x):=\chi(\sigma) g(\tilde{\sigma}^{-1} x \tilde{\sigma}).\] This action is well defined, and independent of the choice of lift $\tilde{\sigma}$ since $K_g$ is an abelian extension of $K$. Given two isomorphisms $\phi$ and $\phi'$, the associated maps $\psi$ and $\psi'$ are equivalent. The association $L\mapsto [\psi]$ is the inverse of $\varphi_\Sigma$. This proves the proposition. 
\end{proof}

\par Let $S_{\Q}$ be the set of all rational primes $\ell$ such that at least one of the following conditions hold:
    \begin{itemize}
        \item $\ell$ is ramified in $K$, 
        \item $\ell$ is a prime at which $E$ has bad reduction.
    \end{itemize} 
    We note that $p$ is ramified in $K$, and hence is contained in $S_{\Q}$. 
    Let $\Sigma$ be a finite set of rational primes that contains the set $S_{\Q}$. We set $Y:=\Sigma\backslash S_{\Q}$, and assume that $Y\subset \mathfrak{T}_E$, where $\mathfrak{T}_E$ is as in Definition \ref{definition of TE}. We shall introduce splitting conditions for $(\G,K)$-extensions at the primes in $S_{\Q}$. For this purpose, we introduce the relevant Selmer groups. \\
    
\par Denote by $\op{G}_{\Sigma}$ the Galois group $\op{Gal}(\Q_\Sigma/\Q)$. For ease of notation, we set 
\[\begin{split}
& H^i(\Q_\Sigma/\Q, \cdot):=H^i(\op{G}_\Sigma, \cdot),\\
& H^i(\Q_\ell, \cdot):=H^i(\op{Gal}(\overline{\Q}_\ell/\Q_\ell), \cdot).\end{split}\] Let $M$ be a finite $\F_p[\op{G}_\Sigma]$-module. 

\begin{definition}
    A \emph{Selmer structure} supported on $\Sigma$ is a collection of subspaces \[\mathcal{L}_\ell\subseteq H^1(\Q_\ell, M)\] for all $\ell \in \Sigma$. The Selmer group associated to the Selmer structure $\mathcal{L}=\{\mathcal{L}_\ell\}_{\ell\in \Sigma}$ is defined as follows
\[H^1_{\cL}\left(\Q_{\Sigma}/\Q, M\right):=\left\{f\in H^1(\Q_{\Sigma}/\Q, M)\mid \op{res}_\ell(f)\in \cL_\ell\text{ for all }\ell\in \Sigma\right\}.\]
\end{definition}
We note that for an elliptic curve $E$ over $\Q$ and a prime $p$, the $p^n$-Selmer group can be realized in this way for $M:=E[p^n]$. In greater detail, let $\ell$ be a prime number and set $\mathcal{L}_\ell$ to denote the image of the mod-$p$ local Kummer map
\[\kappa_{\Q_\ell,n}: E(\Q_\ell)\otimes \Z/p^n \Z\rightarrow H^1(\Q_\ell, E[p^n]).\] Let $\Sigma$ be the set of primes dividing $Np$ where $N$ is the conductor of $E$. Then we have that 
\[\op{Sel}_{p^n}(E/\Q)=H^1_{\cL}\left(\Q_{\Sigma}/\Q, E[p^n]\right).\]
Given $f\in H^1_{\cL}(\Q_\Sigma/\Q, \F_p(\chi))$ recall from Definition \ref{defn of Kf} that $K_f$ is the extension of $K$ cut out by $f$. A $(\G, K)$-extension $L/K/\Q$ is said to satisfy the Selmer condition $\cL$ if $L=K_f$ for some $f\in H^1_{\cL}(\Q_\Sigma/\Q, \F_p(\chi))$. When $\cL_\ell=0$ (resp. $\cL_\ell=H^1(\Q_\ell, \F_p(\chi))$) we say that the condition at $\ell$ is \emph{strict} (resp. \emph{relaxed}). Throughout, we shall impose the strict condition above the primes $\ell\in S_{\Q}$ and the relaxed condition above the primes $\ell\in Y$. More precisely, we specify 
\begin{equation}\label{selmer-structure}
    \cL_{\ell}:=\begin{cases}
    0 & \text{ if }\ell \in S_{\Q};\\
    H^1(\Q_\ell, \F_p(\chi)) & \text{ if }\ell\in Y.
\end{cases}
\end{equation}

\noindent We denote the associated Selmer group by 
\[H^1_{Y}(\Q_\Sigma/\Q, \F_p(\chi)):=H^1_{\cL}(\Q_\Sigma/\Q, \F_p(\chi)),\] 
where $\mathcal{L}$ is as defined in \eqref{selmer-structure}. We fix this choice of $\mathcal{L}$ for the rest of this section.
\begin{definition}\label{W-Y}
    Let $\mathcal{W}_Y=\mathcal{W}_Y(E,K)$ be the set of  $(\G,K)$-extensions which are unramified outside $\Sigma$ and completely split at primes $v$ of $K$ above $S_{\Q}$.
\end{definition}
If $Y'\subseteq Y$, then, $\mathcal{W}_{Y'}\subseteq \mathcal{W}_Y$. The classes in $H^1_{Y}(\Q_\Sigma/\Q, \F_p(\chi))$ give rise to extensions in $\mathcal{W}_Y$.\\

\begin{proposition}\label{prop sel(e/l)=0}
    With respect to notation above, suppose that $L\in \mathcal{W}_Y$. Assume that $\op{Sel}_p(E/K)=0$. Then $\op{Sel}_p(E/L)=0$. 
\end{proposition}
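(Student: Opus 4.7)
The plan is to apply Proposition~\ref{cor 2.7} directly to the extension $L/K$, which has cyclic Galois group $\op{Gal}(L/K)\simeq T\simeq\Z/p\Z$ (this is the $n=1$ case of the hypothesis in that proposition). It will suffice to verify the three conditions (i)–(iii) listed there, and the defining properties of $\mathcal{W}_Y$ and of $\mathfrak{T}_E$ have been arranged precisely to deliver them.

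Condition (i), namely $\op{Sel}_p(E/K)=0$, is part of the standing hypotheses. For condition (ii), I would first observe that $K\supseteq\Q(\mu_p)$, so $p$ is ramified in $K$ and therefore $p\in S_{\Q}$; the primes of $\Q$ of bad reduction for $E$ lie in $S_{\Q}$ by construction as well. The assumption $L\in\mathcal{W}_Y$ (Definition~\ref{W-Y}) then says that every prime $v$ of $K$ lying above $S_{\Q}$ splits completely in $L$. In particular this covers the primes of $K$ above $p$ and the primes of $K$ of bad reduction, which is precisely the content of condition (ii).

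For condition (iii), I would argue as follows. Suppose $v$ is a prime of $K$ that ramifies in $L/K$, and let $\ell$ be the rational prime below $v$. Then $\ell$ ramifies in $L/\Q$, and since $L$ is unramified outside $\Sigma=S_{\Q}\cup Y$, we have $\ell\in\Sigma$. But any prime of $K$ above $S_{\Q}$ is split completely, hence unramified, in $L/K$; this forces $\ell\in Y\subseteq\mathfrak{T}_E$. Lemma~\ref{T_E lemma} then yields $\widetilde{E}(k_v)[p]=0$, which is exactly condition (iii).

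Once all three conditions are in place, Proposition~\ref{cor 2.7} gives $\op{Sel}_p(E/L)=0$. There is no real obstacle here: the substantive work already lives in the local computations of Section~\ref{s 2} (in particular Proposition~\ref{local computations propn}) and in the arithmetic of the auxiliary set $\mathfrak{T}_E$. The proposition is essentially a bookkeeping step whose purpose is to align these earlier results with the Selmer structure on $H^1(\Q_\Sigma/\Q,\F_p(\chi))$ that will parametrize the $(\G,K)$-extensions in the main counting argument.
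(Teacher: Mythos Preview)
Your proof is correct and follows the same approach as the paper: both verify the three hypotheses of Proposition~\ref{cor 2.7} directly from the definition of $\mathcal{W}_Y$ and the inclusion $Y\subseteq\mathfrak{T}_E$, invoking Lemma~\ref{T_E lemma} for condition~(iii). Your argument for~(iii) spells out a bit more detail (tracing why a prime $v$ of $K$ ramifying in $L$ must lie over $Y$ rather than $S_{\Q}$), but otherwise the proofs coincide.
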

\begin{proof}
It suffices to show that the conditions of Proposition \ref{cor 2.7} are satisfied. \begin{enumerate}
    \item[(i)] We have assumed that $\op{Sel}_p(E/K)=0$. 
    \item[(ii)] Since $L\in \mathcal{W}_Y$, we find that all primes of $K$ above $S_{\Q}$ are completely split in $L$. In particular, all primes of $K$ that lie above $p$ and the primes of bad reduction for $E_{/K}$ are completely split in $L$. 
    \item[(iii)] The set of primes of $K$ that ramify in $L$ is a subset of the primes that lie above $Y$. Since $Y$ is assumed to be a subset of $\mathfrak{T}_E$, it follows from Lemma \ref{T_E lemma} that $\widetilde{E}(k_v)[p]=0$ for all primes of $K$ that lie above $Y$.
\end{enumerate}
It thus follows from Proposition \ref{cor 2.7} that $\op{Sel}_p(E/L)=0$. 
\end{proof}

We now recall the formula due to Wiles that allows one to compute the size of a Selmer group by relating it to the dual Selmer group. Let $M$ be a finite $\F_p[\op{G}_\Sigma]$-module and $\cL$ be a Selmer structure on $\Sigma$. Let $M^\vee:=\op{Hom}_{\F_p}\left(M, \F_p(1)\right)$ be the Tate-dual of $M$. At each prime $\ell\in \Sigma$, let $\cL_\ell^\vee$ be the orthogonal complement of $\cL_\ell$ with respect to the nondegenerate local Tate pairing
\[H^1(\Q_\ell, M)\times H^1(\Q_\ell, M^\vee)\rightarrow H^2(\Q_\ell, \F_p(1))\xrightarrow{\sim} \F_p.\]

\begin{definition}
    The \emph{dual Selmer group} is defined as follows
\[H^1_{\cL^\vee} (\Q_\Sigma/\Q, M^\vee):=\left\{g\in H^1(\Q_{\Sigma}/\Q, M^\vee)\mid \op{res}_\ell(g)\in \cL_\ell^\vee\text{ for all }\ell\in \Sigma\right\} .\]
\end{definition}
For $M:=\F_p(\chi)$, the Tate-dual is $M^\vee=\F_p(\omega \chi^{-1})$. For a choice of $Y$ and the $S_{\Q}$-strict Selmer structure, the dual Selmer group in our context is given by
\[H^1_{Y} (\Q_\Sigma/\Q, \F_p(\omega\chi^{-1})):=\left\{g\in H^1(\Q_{\Sigma}/\Q, \F_p(\omega\chi^{-1}))\mid \op{res}_\ell(g)=0\text{ for all }\ell\in Y\right\} .\]

\begin{theorem}[Wiles]\label{wiles thm}
    Let $M$ be a finite $\F_p[\op{G}_\Sigma]$-module. The formula of Wiles asserts that 
    \[\frac{\# H^1_{\cL} (\Q_\Sigma/\Q, M)}{\# H^1_{\cL^\vee} (\Q_\Sigma/\Q, M^\vee)}=\left(\frac{\#H^0(\Q, M)}{\# H^0(\Q, M^\vee)}\right)\times \prod_{\ell\in \Sigma} \left(\frac{\# \cL_\ell}{\# H^0(\Q_\ell, M)}\right).\]
\end{theorem}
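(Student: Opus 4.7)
The plan is to prove this via the Poitou--Tate nine-term global duality sequence together with the local and global Euler characteristic formulas of Tate. This is the classical Greenberg--Wiles argument and I would follow its standard structure.

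\emph{Step 1: Set up the exact sequence.} The Selmer group $H^1_\cL(\Q_\Sigma/\Q,M)$ is by definition the kernel of
\[
\psi : H^1(\Q_\Sigma/\Q,M) \longrightarrow \bigoplus_{\ell\in\Sigma} H^1(\Q_\ell,M)/\cL_\ell.
\]
The heart of the Poitou--Tate duality is that the image of the global restriction map $H^1(\Q_\Sigma/\Q,M)\to \bigoplus_\ell H^1(\Q_\ell,M)$ is the exact annihilator, under the sum of the local Tate pairings, of the corresponding image for $M^\vee$. Since $\cL_\ell^\vee$ is the annihilator of $\cL_\ell$, this identifies $\mathrm{image}(\psi)$ with the annihilator of $H^1_{\cL^\vee}(\Q_\Sigma/\Q,M^\vee)$ inside $\bigoplus_\ell H^1(\Q_\ell,M)/\cL_\ell$. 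Taking Pontryagin duals, I extract the four-term exact sequence
\[
0\to H^1_\cL(M) \to H^1(\Q_\Sigma/\Q,M) \xrightarrow{\psi} \bigoplus_{\ell\in\Sigma} \frac{H^1(\Q_\ell,M)}{\cL_\ell} \to H^1_{\cL^\vee}(M^\vee)^\vee\to 0.
\]

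\emph{Step 2: Take cardinalities.} Multiplicativity of cardinalities in the above sequence yields
\[
\frac{\#H^1_\cL(M)}{\#H^1_{\cL^\vee}(M^\vee)} \;=\; \#H^1(\Q_\Sigma/\Q,M)\cdot \prod_{\ell\in\Sigma}\frac{\#\cL_\ell}{\#H^1(\Q_\ell,M)}.
\]
At this point all that remains is bookkeeping of the global and local $H^1$ terms.

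\emph{Step 3: Apply the Euler characteristic formulas.} I would next invoke Tate's global Euler characteristic formula to replace $\#H^1(\Q_\Sigma/\Q,M)$ by the ratio of $\#H^0$ and $\#H^2$ global terms (times the archimedean factor $\prod_{v\mid\infty}\#H^0(\Q_v,M)/\#M$, which is trivial here since $p$ is odd and hence $H^i(\R,M)=0$ for $i\ge 1$ and $\#H^0(\R,M)=\#M$). Simultaneously, at each finite $\ell\in\Sigma$, the local Euler characteristic formula expresses $\#H^1(\Q_\ell,M)$ in terms of $\#H^0(\Q_\ell,M)$, $\#H^2(\Q_\ell,M)$ and the local norm factor. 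Global Tate duality ($\#H^2(\Q_\Sigma/\Q,M)=\#H^0(\Q,M^\vee)$) and local Tate duality ($\#H^2(\Q_\ell,M)=\#H^0(\Q_\ell,M^\vee)$) then convert every $H^2$ into an $H^0$ of the dual module.

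\emph{Step 4: Simplify.} Substituting everything into the identity of Step~2, the local norm factors cancel against the contribution from $\prod_\ell \#H^1(\Q_\ell,M)$, and the $H^2$ terms are replaced by $H^0$ terms of $M^\vee$. After rearrangement, exactly the factors asserted by Wiles's formula remain. The main obstacle in writing this out is precisely the bookkeeping in this last step: one must keep track of the contribution of every $\ell\in\Sigma$, dispose of the archimedean contribution using that $p$ is odd, and verify that all intermediate $H^2$ cardinalities cancel after invoking duality. Modulo this accounting, the formula follows.
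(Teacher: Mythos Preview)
The paper does not actually prove this theorem; it simply cites \cite[Ch.~8]{NSW}, where the result is established via Poitou--Tate global duality together with Tate's local and global Euler characteristic formulas. That is precisely the machinery you propose to use, so your overall strategy is the standard one.

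However, two of the intermediate assertions in your sketch are not correct as written and would cause the argument to fail if taken literally:
\begin{itemize}
\item In Step~1, the cokernel of $\psi$ is dual to the \emph{image} of $H^1_{\cL^\vee}(\Q_\Sigma/\Q,M^\vee)$ inside $\bigoplus_{\ell}\cL_\ell^\vee$, not to $H^1_{\cL^\vee}(\Q_\Sigma/\Q,M^\vee)$ itself. The discrepancy is the subgroup of classes in $H^1(\Q_\Sigma/\Q,M^\vee)$ that are locally trivial at every $\ell\in\Sigma$. So your four-term sequence is not exact on the right, and the cardinality identity in Step~2 is off by exactly this factor.
\item In Step~3, the claim ``$\#H^2(\Q_\Sigma/\Q,M)=\#H^0(\Q,M^\vee)$'' is false in general. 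Global duality does not pair $H^2$ with $H^0$ directly; it pairs $\Sh^2(\Q_\Sigma/\Q,M)$ with $\Sh^1(\Q_\Sigma/\Q,M^\vee)$, or equivalently it gives the full nine-term Poitou--Tate sequence.
\end{itemize}

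Both problems are resolved by working with the entire nine-term Poitou--Tate sequence rather than trying to extract a four-term fragment: one takes the alternating product of cardinalities over all nine terms, combines this with the global Euler characteristic formula, and the spurious $H^0$ and $H^2$ contributions cancel correctly. This is the ``bookkeeping'' you defer to Step~4, but it is not purely cosmetic---without it the identities you wrote down in Steps~2 and~3 do not hold, and the argument does not close.
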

\begin{proof}
    For a proof of the result, cf. \cite[Ch. 8]{NSW}.
\end{proof}

\begin{proposition}\label{asymptotic of WY}
    With respect to above notation, there is an explicit constant $C>0$, that depends only on $S_{\Q}$ and $K$, such that 
    \[\# \mathcal{W}_Y\geq \, \frac{p^{|Y|}}{C}-\frac{1}{(p-1)}.\]
\end{proposition}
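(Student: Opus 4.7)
The plan is to identify $\mathcal{W}_Y$ with a projective space over the Selmer group $H^1_{Y}(\Q_\Sigma/\Q, \F_p(\chi))$ via the bijection $\varphi_\Sigma$ of Proposition~\ref{bijection 2}, and then bound the size of this Selmer group from below using Wiles' formula (Theorem~\ref{wiles thm}).

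The first task is to show that, under $\varphi_\Sigma$, an extension $L=K_f$ lies in $\mathcal{W}_Y$ if and only if the line through $f$ lies in $H^1_Y(\Q_\Sigma/\Q, \F_p(\chi))$, i.e.\ if and only if $\op{res}_\ell(f)=0$ for every $\ell\in S_{\Q}$. The forward direction is immediate: the vanishing of $\op{res}_\ell(f)$ forces $f|_{\op{G}_{K_v}}=0$ for each $v\mid \ell$, hence the associated homomorphism $g=f|_{\op{G}_K}$ vanishes on every decomposition group above $S_\Q$, so these primes split completely in $L$. The converse is where the hypothesis $\gcd(|B|,p)=1$ enters: if $v\mid\ell$ splits in $L$, then $f$ restricts trivially to $\op{G}_{K_v}$, so by the inflation-restriction sequence $\op{res}_\ell(f)$ comes from $H^1(K_v/\Q_\ell, \F_p(\chi))$, which vanishes because $[K_v:\Q_\ell]$ divides $|B|$ and is thus coprime to $p$. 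Combined with Proposition~\ref{bijection 2}, this gives
\[\#\mathcal{W}_Y \; = \; \frac{\#H^1_Y(\Q_\Sigma/\Q, \F_p(\chi))-1}{p-1}.\]

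Next, I would evaluate each factor in Wiles' formula for $M=\F_p(\chi)$ and $M^\vee=\F_p(\omega\chi^{-1})$. Since $\chi$ is nontrivial, $\#H^0(\Q, \F_p(\chi))=1$. For $\ell \in Y\subseteq \mathfrak{T}_E$, the prime $\ell$ splits in $K(\mu_p)$, so both $\chi$ and $\omega$ are trivial on $\op{G}_{\Q_\ell}$; therefore $\#H^0(\Q_\ell, \F_p(\chi))=p$, and the local Euler characteristic together with local Tate duality gives $\#H^1(\Q_\ell, \F_p(\chi))=p^2$. Thus the relaxed local factor at each $\ell\in Y$ contributes a factor of $p$. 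For $\ell\in S_{\Q}$ the condition is strict, so $\#\cL_\ell=1$ and the local factor is $1/\#H^0(\Q_\ell, \F_p(\chi))$. Collecting everything into the constant
\[C':=\#H^0(\Q, \F_p(\omega\chi^{-1}))\cdot \prod_{\ell\in S_{\Q}} \#H^0(\Q_\ell, \F_p(\chi)),\]
which depends only on $S_{\Q}$ and $K$, Wiles' formula becomes
\[\#H^1_Y(\Q_\Sigma/\Q, \F_p(\chi)) \; = \; \frac{p^{|Y|}}{C'}\cdot \#H^1_Y(\Q_\Sigma/\Q, \F_p(\omega\chi^{-1})).\]

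To conclude, I simply discard the dual Selmer group using $\#H^1_Y(\Q_\Sigma/\Q, \F_p(\omega\chi^{-1}))\geq 1$, obtaining $\#H^1_Y(\Q_\Sigma/\Q, \F_p(\chi))\geq p^{|Y|}/C'$, and substituting into the expression for $\#\mathcal{W}_Y$ yields
\[\#\mathcal{W}_Y \; \geq \; \frac{p^{|Y|}}{(p-1)C'}-\frac{1}{p-1}.\]
Taking $C:=(p-1)C'$, which depends only on $S_{\Q}$ and $K$, proves the proposition. The only delicate step is the equivalence between the strict Selmer condition at $S_{\Q}$ and complete splitting above $S_{\Q}$; the remainder is a routine calculation of local cohomology sizes followed by dropping a (possibly trivial) dual Selmer contribution.
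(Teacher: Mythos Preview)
Your argument is correct and follows essentially the same approach as the paper: identify $\mathcal{W}_Y$ with $\mathbb{P}\!\left(H^1_Y(\Q_\Sigma/\Q,\F_p(\chi))\right)$ via $\varphi_\Sigma$, apply Wiles' formula, compute the local factor $p$ at each $\ell\in Y$, and discard the dual Selmer group. You in fact supply the justification (via inflation--restriction and $\gcd(|B|,p)=1$) for the bijection $\mathcal{W}_Y\leftrightarrow \mathbb{P}(H^1_Y)$ that the paper only asserts; note however that your ``forward'' and ``converse'' labels are swapped relative to the biconditional you state, though both implications are argued correctly.
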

\begin{proof}
It follows from the proof of Proposition \ref{bijection 2} that the set $\mathcal{W}_Y$ is in bijection with $\mathbb{P}\left(H^1_Y(\Q_\Sigma/\Q, \F_p(\chi))\right)$ and hence, 
\begin{equation}\label{H-1-card-eqn}
    \# \mathcal{W}_Y= \, \frac{\# H^1_Y(\Q_\Sigma/\Q, \F_p(\chi))-1}{p-1}.
\end{equation}
The formula of Wiles, as in Theorem \ref{wiles thm}, implies that 
\[\# H^1_Y(\Q_\Sigma/\Q, \F_p(\chi))=p\times \# H^1_Y(\Q_\Sigma/\Q, \F_p(\omega\chi^{-1}))\times C^{-1}\times \prod_{\ell\in Y} \left(\frac{\# H^1(\Q_\ell, \F_p(\chi))}{\# H^0(\Q_\ell, \F_p(\chi))}\right),\] where
\[C:=p\times\#H^0(\Q, \F_p(\omega\chi^{-1}))\times \prod_{\ell\in S_{\Q}} \# H^0(\Q_\ell, \F_p(\chi));\] a quantity that depends only on $S_{\Q}$ and $\chi$, and not on $Y$. \\

\par Note that each of the primes $\ell\in Y$ is in $\mathfrak{T}_E$ and hence splits in $K$. Therefore the character $\chi$ is trivial when restricted to $\op{G}_{\Q_\ell}$. Also, we note that since $\Q(\mu_p)$ is contained in $K$, the prime $\ell$ is $1\bmod p$. It follows from class field theory that $\# H^1(\Q_\ell, \F_p)=p^2$ and therefore, 
\[\frac{\# H^1(\Q_\ell, \F_p(\chi))}{\# H^0(\Q_\ell, \F_p(\chi))}=\frac{p^2}{p}=p\] for $\ell\in Y$. Therefore, substituting the above in \eqref{H-1-card-eqn}, we have that
\[\# H^1_Y(\Q_\Sigma/\Q, \F_p(\chi))\geq \frac{p^{|Y|+1}}{C}\]
and
 \[\# \mathcal{W}_Y\geq \left(\frac{\frac{p^{|Y|+1}}{C}-1}{p-1}\right)\geq \frac{p^{|Y|}}{C}-\frac{1}{(p-1)}.\]
This completes the proof. 
\end{proof}

\noindent Thus, we now have our main result of this section.
\begin{theorem}\label{n=1 thm}
Let $K$ be a Galois extension of $\Q$ and $p$ be an odd prime number. Let $E$ be an elliptic curve over $\Q$ and assume that the following conditions are satisfied
\begin{enumerate}
    \item[(i)] $\op{Sel}_p(E/K)=0$,
    \item[(ii)] $B:=\op{Gal}(K/\Q)$ is a finite abelian group, 
    \item[(iii)] $|B|$ is prime to $p$,
    \item[(iv)] $K$ contains $\Q(\mu_p)$, 
    \item[(v)] $K\cap \Q(E[p])=\Q(\mu_p)$.
    \item[(vi)] The Galois representation 
    $\rho_{E, p}:\op{G}_{\Q}\rightarrow \op{GL}_2(\F_p)$ is surjective.
\end{enumerate}
Let $\G$ be a finite group that is a semi-direct product $\G=B\ltimes T$, where $T\simeq \Z/p\Z$. Let $\mathfrak{T}_E$ be as in Definition \ref{definition of TE}, and for any finite set $Y \subset \mathfrak{T}_E$, let $\mathcal{W}_Y$ be as in Definition \ref{W-Y}. Then 
    \[\# \mathcal{W}_Y\gg p^{|Y|},\]
    for any finite set $Y\subset \mathfrak{T}_E$. 
    Moreover, for any $L\in \mathcal{W}_Y$, the Selmer group $\op{Sel}_p(E/L)=0$. In particular, the rank of $E(L)$ is $0$.
\end{theorem}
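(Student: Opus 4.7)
The proof is essentially an assembly of the results established earlier in this section. First I would verify that the hypotheses (i)--(vi) are exactly the standing assumptions made at the beginning of Section \ref{s 3}, so that Propositions \ref{bijection 2}, \ref{prop sel(e/l)=0}, and \ref{asymptotic of WY} are all applicable with respect to the given data $(K, E, p, \chi_0, \G)$ and any finite subset $Y \subset \mathfrak{T}_E$. The set $\mathcal{W}_Y$ is defined (Definition \ref{W-Y}) as the set of $(\G,K)$-extensions which are unramified outside $\Sigma = S_\Q \cup Y$ and completely split at primes of $K$ above $S_\Q$.

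For the lower bound, I would directly invoke Proposition \ref{asymptotic of WY}, which yields the estimate
\[
\# \mathcal{W}_Y \; \geq \; \frac{p^{|Y|}}{C} - \frac{1}{p-1},
\]
where the constant $C$ depends only on $S_\Q$ and the character $\chi$ (equivalently, on $K$, $E$, $p$, and $\chi_0$), and is independent of the chosen finite set $Y \subset \mathfrak{T}_E$. Since $C$ is a fixed positive constant in this setup, this immediately gives $\# \mathcal{W}_Y \gg p^{|Y|}$, with the implied constant depending only on the initial data.

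For the Selmer-group vanishing statement, I would fix an arbitrary $L \in \mathcal{W}_Y$ and apply Proposition \ref{prop sel(e/l)=0}. Its three hypotheses are satisfied: (i) $\op{Sel}_p(E/K) = 0$ by assumption (i) of the theorem; (ii) the membership $L \in \mathcal{W}_Y$ guarantees that all primes of $K$ above $S_\Q$ (hence in particular all primes above $p$ and all primes of bad reduction of $E$) split completely in $L$; and (iii) the primes of $K$ ramifying in $L$ lie above primes in $Y \subset \mathfrak{T}_E$, so Lemma \ref{T_E lemma} yields $\widetilde{E}(k_v)[p] = 0$ at each such prime. Hence $\op{Sel}_p(E/L) = 0$.

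Finally, the vanishing rank $E(L) = 0$ is immediate from the standard exact sequence
\[
0 \longrightarrow E(L) \otimes \Z/p\Z \longrightarrow \op{Sel}_p(E/L) \longrightarrow \Sh(E/L)[p] \longrightarrow 0,
\]
since $\op{Sel}_p(E/L) = 0$ forces $E(L) \otimes \Z/p\Z = 0$, and a finitely generated abelian group killed modulo $p$ has trivial free part. There is no essential new obstacle in this proof; the substantive work was done in the preceding propositions, and the role of this theorem is to package those results into a clean statement. The only point requiring a moment's care is the independence of the constant $C$ from $Y$, which is transparent from the explicit formula for $C$ given in the proof of Proposition \ref{asymptotic of WY}.
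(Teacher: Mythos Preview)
Your proposal is correct and follows essentially the same approach as the paper: invoke Proposition \ref{asymptotic of WY} for the lower bound $\#\mathcal{W}_Y \gg p^{|Y|}$ and Proposition \ref{prop sel(e/l)=0} for the vanishing $\op{Sel}_p(E/L)=0$. One minor remark: when you write ``its three hypotheses are satisfied'' for Proposition \ref{prop sel(e/l)=0}, you are actually re-verifying the three conditions of Proposition \ref{cor 2.7} that appear in its \emph{proof}; the proposition itself only requires $L\in\mathcal{W}_Y$ and $\op{Sel}_p(E/K)=0$, so a direct citation suffices.
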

Note that $\mathfrak{T}_E$ is an infinite set of primes with density
$$ \mathfrak{d}(\mathfrak{T}_E)=\frac{p^2-p-1}{[K:\Q(\mu_p)](p-1)(p^2-1)}, $$
as proved in Lemma \ref{density lemma}, with $ \Q(\mu_p) \subseteq K $.
\begin{proof}[Proof of Theorem \ref{n=1 thm}]
Recall from Definition \ref{definition of TE} that $\mathfrak{T}_E$ is set of prime numbers $\ell$ satisfying the following conditions
    \begin{itemize}
        \item $\ell\neq p$, 
        \item $\ell$ is a prime at which $E$ has good reduction, 
        \item $\ell$ is completely split in $K(\mu_p)$,
        \item $\op{trace}\left(\rho_{E,p}(\sigma_\ell)\right)\neq 2$. 
    \end{itemize}
It follows from Proposition \ref{asymptotic of WY} that $\#\mathcal{W}_Y\gg p^{|Y|}$, and from Proposition \ref{prop sel(e/l)=0} that $\op{Sel}_p(E/L)=0$ for $L \in \mathcal{W}_Y$.
\end{proof}

\subsection*{An Example} We end this section with an example where the conditions of Theorem \ref{n=1 thm} are satisfied. We take $p=3$ and since $K=\Q(\mu_3)$. The cyclotomic character $\omega:\op{Gal}(K/\Q)\rightarrow (\Z/3\Z)^\times$ is an isomorphism, and $\chi_0: (\Z/3\Z)^\times\rightarrow (\Z/3\Z)^\times$ is the identity map. Then, $\chi$ is equal to $\omega$. Consider the elliptic curve 
\[E: y^2+y=x^3-x^2-7820x-263580;\] consider the following code on Sage
\begin{verbatim}
E=EllipticCurve("11a2")
K = QuadraticField(-3, 'z')
EK = E.base_extend(K)
EK.rank()
rho = E.galois_representation()
rho.image_type(3)
EK.torsion_order()
\end{verbatim}
which shows that $E(K)[3]=0$, $\op{rank}E(K)=0$ and $\rho_{E,3}$ is surjective. It remains to show that $\Sh(E/K)[3]=0$ and then we shall deduce that $\op{Sel}_3(E/K)=0$. 

\noindent Note that 
\[\op{Sel}_3(E/K)=\op{Sel}_3(E/\Q)\oplus \op{Sel}_3(E^{(-3)}/\Q)\]
where $E^{(-3)}$ is the quadratic twist of $E$ by $-3$. Thus, it suffices to show (via $3$-descent) that $\op{Sel}_3(E/\Q)=0$ and $\op{Sel}_3(E^{(-3)}/\Q)=0$. For this, we use the following code on Magma
\begin{verbatim}
E := EllipticCurve("11a2");
ThreeSelmerGroup(E);
Et:=QuadraticTwist(E, -3);
ThreeSelmerGroup(Et);
\end{verbatim}
which shows that $\op{Sel}_3(E/\Q)$ and $\op{Sel}_3(E^{(-3)}/\Q)$ are trivial.

\bigskip 

\section{\bf Stability in extensions with Galois group $B\ltimes \Z/p^n\Z$}\label{s 4}
\par In this section, $p\geq 5$ is a prime number. We consider $K/\Q$ and $E_{/\Q}$ with $B:=\op{Gal}(K/\Q)$. We let $\G =B \ltimes T$ where $T \simeq \Z/p^n \Z$ and $\chi_0:B\rightarrow (\Z/p^n \Z)^\times$ be the associated character. We take $\chi: \op{G}_\Q \rightarrow (\Z/p^n \Z)^\times$ the associated Galois character which factors as 
\[\op{G}_\Q\rightarrow \op{Gal}(K/\Q)\xrightarrow{\sim} B\xrightarrow{\chi_0} (\Z/p^n \Z)^\times.\]
Throughout, $M$ will denote the Galois module $(\Z/p^n \Z)(\chi)$. We set $\epsilon_n$ to denote the mod-$p^n$ cyclotomic character and identify the Tate-dual of $M$ with $(\Z/p^n \Z) (\epsilon_n \chi^{-1})$. \\

Assume that the following conditions are satisfied
\begin{itemize}
    \item $\op{Sel}_p(E/K)=0$,
    \item $B$ is a finite abelian group,
    \item $K$ contains $\Q(\mu_{p^n})$. In particular, the group $B$ has order divisible by $p^{n-1}(p-1)$.
    \item The character $\chi_0$ is nontrivial modulo $p$.
    \item The intersection $K\cap \Q(E[p])$ is equal to $\Q(\mu_p)$.
    \item The Galois representation 
    $\rho_{E, p}:\op{G}_{\Q}\rightarrow \op{GL}_2(\F_p)$ is surjective.
\end{itemize}
Once more, let $S_{\Q}$ be the set of all rational primes $\ell$ such that at least one of the following conditions hold:
    \begin{itemize}
        \item $\ell$ is ramified in $K$,
        \item $\ell$ is a prime at which $E$ has bad reduction.
    \end{itemize} 
For a finite set of primes $\Sigma$ that contains $S_{\Q}$, set $Y:=\Sigma\backslash S_{\Q}$. Let $\mathfrak{T}_E$ be as in Definition \ref{definition of TE}. Assume that $Y \subset \mathfrak{T}_E$. \\

Consider the inflation restriction sequence
\begin{equation}\label{some inflation-res seq}
    0\rightarrow H^1(B, M)\xrightarrow{\op{inf}_K} H^1(\Q_\Sigma/\Q, M)\xrightarrow{\op{res}_K} \op{Hom}\left(\op{Gal}(\Q_\Sigma/K), M\right)^B.
\end{equation}
We take $H^1_{\op{surj}}(\Q_\Sigma/\Q, M)$ to be the subset of $H^1(\Q_\Sigma/\Q, M)$ consisting of classes for which $\op{res}_K(f)$ surjects onto $M$. We note that when $n=1$, the set $H^1_{\op{surj}}(\Q_\Sigma/\Q, M)$ consists of non-zero classes. Given a cohomology class $f\in H^1_{\op{surj}}(\Q_\Sigma/\Q, M)$, set $g:=\op{res}_K(f)$ and let $K_f:=\overline{\Q}^{\op{ker} g}$. It is easy to see that $K_f$ is a $(\G,K)$-extension of $\Q$. \\

Define $f\sim f'$ if there exists $c\in (\Z/p^n\Z)^\times$ such that $f-c f'\in \op{inf}_K\left(H^1(B, M)\right)$, and let $\overline{H^1}_{\op{surj}}(\Q_\Sigma/\Q, M)=H^1_{\op{surj}}(\Q_\Sigma/\Q, M)/\sim$ denote the set of equivalence classes. If $f \sim f'$, then $K_f = K_{f'}$. It is also easy to see that 
\[\# \overline{H^1}_{\op{surj}}(\Q_\Sigma/\Q, M)=\frac{\# H^1_{\op{surj}}(\Q_\Sigma/\Q, M)}{p^{n-1}(p-1)\, \, \# H^1(B, M)}.\]

\begin{proposition}\label{bijection 1}
    With respect to notation above, there is a natural injection
    \[\overline{H^1}_{\op{surj}}(\Q_\Sigma/\Q, M) \hookrightarrow \{L\mid L\text{ is a }(\G,K)\text{ extension of }\Q\text{, unramified outside }\Sigma\},\] defined by taking $\varphi_\Sigma(f):=K_f$.
\end{proposition}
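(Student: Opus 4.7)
The plan is to prove the statement in three steps: (a) construct $K_f$ via a canonical group-theoretic lift of $f$, (b) verify that $f\mapsto K_f$ descends to the equivalence relation $\sim$, and (c) show the resulting map on equivalence classes is injective by reading off $f$ up to $\sim$ from the field $K_f$.

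First I would set up step (a). Let $\pi:\op{G}_\Q\twoheadrightarrow B$ denote the quotient induced by $K/\Q$, and let $f:\op{G}_\Sigma\to M$ be a cocycle representative of a class in $H^1_{\op{surj}}(\Q_\Sigma/\Q,M)$, so that $g:=\op{res}_K(f)$ is surjective onto $M$. Define
\[\Phi_f:\op{G}_\Sigma\longrightarrow \G=B\ltimes_{\chi_0} T,\qquad \Phi_f(\sigma):=(\pi(\sigma),\,f(\sigma)).\]
Since $\chi=\chi_0\circ\pi$, the cocycle identity for $f$ translates verbatim into the semidirect-product multiplication rule in $\G$, so $\Phi_f$ is a group homomorphism. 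Its kernel is $\ker(g)\subseteq\op{G}_K$, and $\Phi_f$ is surjective iff $g$ is, which is our hypothesis. Thus $K_f:=\overline{\Q}^{\ker(g)}$ is a Galois extension of $\Q$, contained in $\Q_\Sigma$, carrying an isomorphism $\op{Gal}(K_f/\Q)\simeq\G$ that restricts to $\op{Gal}(K_f/K)\xrightarrow{\sim}T$. Hence $K_f$ is a $(\G,K)$-extension unramified outside $\Sigma$. A quick check that a different cocycle representative $f+\partial m$ yields $\Phi_{f+\partial m}$ conjugate to $\Phi_f$ by $(1,m)\in\G$ shows $K_f$ is independent of the chosen representative.

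For step (b), suppose $f\sim f'$, so $f-cf'=\op{inf}_K(h)$ for some $c\in(\Z/p^n\Z)^\times$ and $h\in H^1(B,M)$. Applying $\op{res}_K$ to both sides and using exactness of \eqref{some inflation-res seq} kills the inflation term, giving $g=cg'$. Since multiplication by the unit $c$ is an automorphism of $M$, we obtain $\ker(g)=\ker(g')$, hence $K_f=K_{f'}$. Therefore the assignment descends to a well-defined map on $\overline{H^1}_{\op{surj}}(\Q_\Sigma/\Q,M)$.

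For step (c), suppose $K_f=K_{f'}$. Then $g$ and $g'$ are surjective homomorphisms $\op{Gal}(\Q_\Sigma/K)\to M$ with the same kernel; since $M$ is cyclic of order $p^n$, any two such surjections differ by an automorphism of $M$, so $g=cg'$ for some $c\in(\Z/p^n\Z)^\times$. Consequently $\op{res}_K(f-cf')=0$, and exactness of \eqref{some inflation-res seq} places $f-cf'$ in $\op{inf}_K(H^1(B,M))$, i.e.\ $f\sim f'$. The only mildly delicate point, which I expect to be the main bookkeeping obstacle, is to keep straight that a $(\G,K)$-extension is a field extension considered only up to a choice of identification $\op{Gal}(L/\Q)\simeq\G$, whereas the homomorphism $\Phi_f$ is rigid; the $(\Z/p^n\Z)^\times$-rescaling built into $\sim$ is precisely what absorbs the $\op{Aut}(T)$-ambiguity in this identification, while the inflation summand from $H^1(B,M)$ absorbs the freedom in splitting $\G\to B$.
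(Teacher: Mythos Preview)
Your proof is correct and follows essentially the same route as the paper's: both arguments pass from $f$ to its restriction $g=\op{res}_K(f)$, use that $K_f$ is determined by $\ker(g)$, and appeal to the fact that two surjections from a group onto the cyclic group $M$ with the same kernel differ by a unit in $(\Z/p^n\Z)^\times$. Your explicit construction of the lift $\Phi_f:\op{G}_\Sigma\to\G$ is a clean way to verify that $K_f$ really is a $(\G,K)$-extension; the paper leaves this step implicit, simply asserting that $B$-equivariant surjective homomorphisms $g$ ``give rise to'' $(\G,K)$-extensions.
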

\begin{proof}
    From \eqref{some inflation-res seq}, identify $\overline{H^1}_{\op{surj}}(\Q_\Sigma/\Q, M)$ with the equivalence classes of $B$-equivariant surjective homorphisms $g:\op{Gal}(\Q_\Sigma/K) \rightarrow M$. Here, $g\sim g'$ if there exists $c\in (\Z/p^n \Z)^\times$ such that $g=c g'$. These equivalence classes give rise to the set of $(\G,K)$-extensions that are unramified outside $\Sigma$, where the association in question is $\varphi_\Sigma$.
\end{proof}

\par In this section, we write $Y= V \cup Z$, where $V$ and $Z$ are disjoint sets. The set $Z$ is be fixed throughout and will be chosen later. The set $V$ will vary over non-empty finite subsets of $\mathfrak{T}_E$ that are disjoint from $Z$. 
\begin{definition}\label{B-V definition}
    Let $\mathfrak{B}_V =\mathfrak{B}_V(E,K)$ denote the set of $(\G,K)$-extensions $L/K/\Q$ such that
\begin{itemize}
    \item primes outside $\Sigma=S_{\Q}\cup V\cup Z$ are unramified in $L$ 
    \item primes of $K$ above $V$ are totally ramified in $L$
    \item primes of $K$ above $S_{\Q}$ are completely split in $L$.
\end{itemize}
\end{definition}

As in the previous section, we wish to parameterize such extensions by Selmer classes. Given a prime $\ell\in Y$, since $Y$ is a subset of $\mathfrak{T}_E$, $\ell$ splits in $K$. Since $K$ contains $\Q(\mu_{p^n})$ by assumption, it follows that $\ell\equiv 1\mod{p^n}$. As $\chi$ is trivial when restricted to $\op{G}_K$, the character $\chi$ is also trivial when restricted to $\op{G}_{\Q_\ell}$. By local class field theory, $H^1(\Q_\ell, M)= \op{Hom}\left(\op{G}_{\Q_\ell}, M\right)$, and thus, $\# H^1(\Q_\ell, M)=p^{2n}$. For a prime $\ell\in V$, and a class $f\in H^1_Y(\Q_\Sigma/\Q, M)$, we say that $f$ is totally ramified at $\ell$ if the restricted homomorphism $\op{res}_\ell(f):\op{I}_\ell\rightarrow M$ is surjective. \\

\par Let $\cL$ be the Selmer structure supported on $\Sigma$ that is strict at all primes of $S_{\Q}$ and relaxed at all primes of $Y$. More specifically
\[\cL_{\ell}:=\begin{cases}
    0 & \text{ if }\ell \in S_{\Q};\\
    H^1(\Q_\ell, M) & \text{ if }\ell\in Y.
\end{cases}\]
As in the previous section, we let $H^1_Y(\Q_\Sigma/\Q, M)$ and $H^1_{Y}(\Q_\Sigma/\Q, M^\vee)$ be the associated Selmer and dual Selmer groups respectively. We have 
\begin{align*}
    H^1_Y(\Q_\Sigma/\Q, M) & =\{f\in H^1(\Q_\Sigma/\Q, M)\mid f\text{ is trivial at all primes }\ell\in S_{\Q}\};\\
    H^1_Y(\Q_\Sigma/\Q, M^\vee) & =\{\psi\in H^1(\Q_\Sigma/\Q, M^\vee)\mid \psi\text{ is trivial at all primes }\ell\in Y\}.
\end{align*}
As a consequence of Wiles' formula (Theorem \ref{wiles thm}), we get
\[\begin{split}\frac{\# H^1_Y (\Q_\Sigma/\Q, M)}{ \# H^1_Y (\Q_\Sigma/\Q, M^\vee)}=& \left(\frac{\# H^0(\Q, M)}{\# H^0(\Q, M^\vee)}\right)\times \left(\prod_{\ell\in S_{\Q}} \frac{1}{\# H^0(\Q_\ell, M)}\right)\times \left(\prod_{\ell\in Y} \frac{\# H^1(\Q_\ell, M)}{\# H^0(\Q_\ell, M)}\right) \\
=& \frac{p^{n|Y|}}{C},\end{split}\] where $C$ is a constant that does not depend on $\Sigma$. We want to count the number of classes in $H^1_Y (\Q_\Sigma/\Q, M)$ that are totally ramified at the primes $\ell\in V$. In order to make this possible, we choose a set of primes $Z\subset \mathfrak{T}_E$, so that the dual Selmer group $H^1_Y(\Q_\Sigma/\Q, M^\vee)=0$ for any choice of $V \subset \mathfrak{T}_E$. \\

Since $\Q(\mu_{p^n}) \subseteq K$, $\epsilon_n$ is trivial when restricted to $G_K$. The same is also true of $\chi$, as is evident from its definition. Therefore, the action of $G_K$ on $M^\vee$ is trivial. Consider the following inflation-restriction sequence.
\begin{equation}\label{another inflation-rest sequence}
    0 \rightarrow H^1(B,M^\vee) \xrightarrow{\op{inf}_K} H^{1}(\Q_{\Sigma}/\Q, M^{\vee}) \xrightarrow{\op{res}_K} \op{Hom} {\left( \op{Gal}(\Q_{\Sigma}/K), M^\vee \right)}^B.
\end{equation}
Let $0 \neq \psi \in H^1( \Q_{\Sigma}/\Q, M^\vee)$, and $\psi_K := \op{res}_K(\psi)$ be the restriction of $\psi$ to $G_K$. Define $K_{\psi}:= \overline{K}^{\op{ker}(\psi_K)}$ to be the extension of $K$ that is cut out by $\psi$. Note that $K_{\psi}/K$is a Galois extension, with $\op{Gal}(K_{\psi}/K) \simeq \op{image}(\psi_K)$. Hence, $\op{Gal}(K_{\psi}/K) \simeq \Z/p^m\Z$ for some $m \leq n$. \\

The proof of the following Lemma requires the assumption that $p \geq 5$. We use the fact that the group $\op{PSL}_2(\F_p)$ is known to be simple when $p \geq 5$. Given a finite Galois extension $F/\Q$, by a Chebotarev class in $\op{Gal}(F/\Q)$ we mean a conjugacy class in $\op{Gal}(F/\Q)$.
\begin{lemma}\label{technical lemma 1}
Let $\psi\in H^1(\Q_\Sigma/\Q, M^\vee)$ be a class for which $\psi_K$ is non-trivial. Let $\mathcal{C}_{\psi}$ be the set of all rational primes $\ell$ such that 
   \begin{enumerate}
       \item[(a)] $\ell$ splits in $K$, 
       \item[(b)] the trace of $\rho_{E,p}(\sigma_\ell)$ is not $2 \bmod p$, 
       \item[(c)] any prime $v$ of $K$ that lies above $\ell$ does not split completely in $K_{\psi}$.
   \end{enumerate}
Then the collection of primes $\ell\in \mathcal{C}_{\psi}$ is infinite and defined by Chebotarev classes in the Galois group $\op{Gal}(K_{\psi}(E[p])/\Q)$.
\end{lemma}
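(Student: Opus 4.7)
My approach is to recognize the three conditions defining $\mathcal{C}_\psi$ as conditions on the Frobenius conjugacy class in $\op{Gal}(F/\Q)$, where $F := K_\psi(E[p])$, and then invoke the Chebotarev density theorem. First I would note that since $\psi$ is unramified outside $\Sigma$ and $E$ has good reduction outside $S_\Q \subseteq \Sigma$, the extension $F/\Q$ is unramified outside a finite set, so for all but finitely many $\ell$ the Frobenius class $\sigma_\ell \in \op{Gal}(F/\Q)$ is well-defined. Then I would translate each condition: (a) becomes $\sigma_\ell|_K = 1$; (b) is a conjugation-invariant condition on $\sigma_\ell|_{\Q(E[p])}$; and (c), upon noting that $\ell$ splits in $K$ forces the decomposition groups at primes $v \mid \ell$ to lie in $\op{Gal}(F/K)$ and that $K_\psi/K$ is abelian (so the Frobenius in $\op{Gal}(K_\psi/K)$ is independent of the choice of $v \mid \ell$), becomes the condition $\sigma_\ell|_{K_\psi} \neq 1$. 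Hence $\mathcal{C}_\psi$ is defined by a union $\mathcal{U}$ of conjugacy classes of $\op{Gal}(F/\Q)$.

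The core of the argument, and the main step where the hypothesis $p \geq 5$ is used, is showing $\mathcal{U} \neq \emptyset$. For this I would first establish that $K_\psi \cap K(E[p]) = K$. Indeed, $\op{Gal}(K_\psi \cap K(E[p])/K)$ is simultaneously a quotient of $\op{Gal}(K_\psi/K)$, which is cyclic of $p$-power order (being isomorphic to $\op{image}(\psi_K) \subseteq M^\vee$), and of $\op{Gal}(K(E[p])/K)$. The latter group is $\op{SL}_2(\F_p)$: since $\rho_{E,p}$ is surjective and $K \cap \Q(E[p]) = \Q(\mu_p)$, restriction gives $\op{Gal}(K(E[p])/K) \simeq \op{Gal}(\Q(E[p])/\Q(\mu_p)) = \op{SL}_2(\F_p)$. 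For $p \geq 5$, the group $\op{PSL}_2(\F_p)$ is simple and non-abelian, from which it is standard that $\op{SL}_2(\F_p)$ is perfect, so it admits no nontrivial abelian quotients. Hence the common quotient is trivial and $K_\psi$, $K(E[p])$ are linearly disjoint over $K$.

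From linear disjointness we obtain a direct product decomposition
\[
\op{Gal}(F/K) \;\simeq\; \op{Gal}(K_\psi/K) \times \op{SL}_2(\F_p).
\]
I would then construct an element $\sigma = (\sigma_1,\sigma_2)$ as follows: since $\psi_K$ is non-trivial by hypothesis, there exists $\sigma_1 \neq 1$ in $\op{Gal}(K_\psi/K)$; and the counting carried out in the proof of Lemma~\ref{density lemma} shows that the number of matrices in $\op{SL}_2(\F_p)$ with trace $\neq 2$ equals $p^3 - p^2 - p > 0$, providing a suitable $\sigma_2$. Viewed in $\op{Gal}(F/\Q)$ via the inclusion $\op{Gal}(F/K) \hookrightarrow \op{Gal}(F/\Q)$, this $\sigma$ satisfies all three conditions, so its conjugacy class lies in $\mathcal{U}$.

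Finally, Chebotarev's theorem gives infinitely many rational primes $\ell$ (unramified in $F$) with $\sigma_\ell \in \mathcal{U}$, and these are exactly the primes of $\mathcal{C}_\psi$ (up to the finite set of ramified or excluded primes, which can be absorbed). The main obstacle is the linear disjointness step: without $p \geq 5$ the perfectness of $\op{SL}_2(\F_p)$ fails and $K_\psi$ could meet $K(E[p])$ non-trivially, potentially obstructing the independent choice of $\sigma_1$ and $\sigma_2$; once linear disjointness is in place, the remainder is a routine Chebotarev count.
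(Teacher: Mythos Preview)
Your proposal is correct and follows essentially the same route as the paper: both proofs translate conditions (a)--(c) into a union of conjugacy classes in $\op{Gal}(K_\psi(E[p])/\Q)$, establish the key linear disjointness $K_\psi \cap K(E[p]) = K$ via the simplicity of $\op{PSL}_2(\F_p)$ for $p\geq 5$ (you phrase this as perfectness of $\op{SL}_2(\F_p)$, which is equivalent here), and then exhibit a nonempty class using a nontrivial element of $\op{Gal}(K_\psi/K)$ together with a trace-$\neq 2$ matrix in $\op{SL}_2(\F_p)$ before invoking Chebotarev.
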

\begin{proof}
Let $\ell$ be a rational prime, unramified in $K$. Denote by $\mathcal{C}_1 \subseteq \op{Gal}(K/\Q)$ the conjugacy class consisting of the identity element. The condition (a) is satisfied if and only if  $\sigma_{\ell} \in \mathcal{C}_1$. Now, view $\mathcal{C}_1 \subseteq \op{Gal}(K/\Q(\mu_p))$. Since $\rho_{E,p}$ is surjective, we identify $\op{Gal}(\Q(E[p])/\Q(\mu_p))$ with $\op{SL}_2(\F_p)$. Denote by $\mathcal{C}_2 \subseteq \op{SL}_2(\F_p)$ the matrices with trace not equal to $2$, that is condition (b). Recall that $K\cap \Q(E[p])=\Q(\mu_p)$, and hence, 
\begin{equation*}
    \op{Gal}(K(E[p]) / \Q(\mu_p)) \simeq \op{Gal}(K/\Q(\mu_p)) \times \op{Gal}(\Q(E[p])/\Q(\mu_p)).
\end{equation*}
Define $\mathcal{C}_3 := \mathcal{C}_1 \times \mathcal{C}_2$ and view it as a subset of $ \op{Gal}(K(E[p]) / \Q)$. Thus, both conditions (a) and (b) are satisfied if and only if $\ell$ is unramified in $K(E[p])$ and $\sigma_{\ell} \in \mathcal{C}_3$. \\

\begin{center}
\begin{tikzpicture}

    \node (Q1) at (0,0) {$\Q$};
    \node (Q2) at (0,2) {$\Q(\mu_p)$};
    \node (Q3) at (0,5) {$K$};
    \node (Q4) at (2.5,4) {$\Q(E[p])$};
    \node (Q5) at (2.5,7) {$K(E[p])$};
    \node (Q6) at (-2.5,7) {$K_\psi$};
    \node (Q7) at (0,9) {$K_\psi(E[p])$};

    \draw (Q1)--(Q2);
    \draw (Q2)--(Q3);
    \draw (Q3)--(Q5);
    \draw (Q2)--(Q4);
    \draw (Q6)--(Q7);
    \draw (Q5)--(Q7);
    \draw (Q4)--(Q5);
    \draw (Q3)--(Q6);
    \end{tikzpicture}
\end{center}

\noindent Note that $K_{\psi} \cap K(E[p]) = K$ as $p \geq 5$. In greater detail, $K_{\psi}/K$ is a cyclic $p$-extension, whereas $\op{Gal}(K(E[p])/K)\simeq \op{Gal}(\Q(E[p])/\Q(\mu_p)) \simeq \op{SL}_2(\F_p)$. As $\op{PSL}_2(\F_p)$ is simple when $p \geq 5$, there are no sub-extensions $K \subsetneq K' \subsetneq  K(E[p])$ with $K'/K$ being cyclic, of order dividing $p$. Consequently, $K_{\psi} \, \cap \, K(E[p]) = K$, and
\begin{equation*}
    \op{Gal}(K_{\psi}(E[p])/K) \simeq \op{Gal}(K_{\psi}/K) \times \op{Gal}(K(E[p]/K).
\end{equation*}
Since $\mathcal{C}_1$ is identity, $\mathcal{C}_3 \subseteq \op{Gal}(K(E[p])/K)$. Let $\mathcal{C}_4 \subseteq \op{Gal}(K_{\psi}/K)$ consist of the non-trivial elements, and define $\mathcal{C}_{\psi} := \mathcal{C}_3 \times \mathcal{C}_4 $, viewed as a subset of $\op{Gal}(K_{\psi}(E[p])/K) \subseteq \op{Gal}(K_{\psi}(E[p])/\Q)$. Thus, for a rational prime $\ell$, unramified in $K$, the conditions (a), (b) and (c) are satisfied if and only if $\sigma_{\ell} \in \mathcal{C}_{\psi}$. 
\end{proof}

We now apply the above lemma to show the following.
\begin{proposition}\label{tech prop 1}
There exists a finite set of primes $Z \subset \mathfrak{T}_E$ such that $$H^{1}_Y( \Q_{\Sigma}/\Q, M^\vee) = \op{inf}_K \left( H^1(B, M^\vee) \right)$$
whenever $Z \subseteq Y$.
\end{proposition}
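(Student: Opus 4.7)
The plan is to exploit the finiteness of a suitable ambient cohomology group. Any class $\psi\in H^{1}_Y(\Q_\Sigma/\Q,M^\vee)$ has trivial restriction at each $\ell\in Y\setminus S_{\Q}$; since $M^\vee\simeq(\Z/p^n\Z)(\epsilon_n\chi^{-1})$ is unramified at every $\ell\notin S_{\Q}$, the triviality of $\psi|_{\op{G}_{\Q_\ell}}$ forces $\psi|_{\op{I}_\ell}=0$, so $\psi$ is unramified at $\ell$ and therefore factors through $\op{Gal}(\Q_{S_{\Q}}/\Q)$. Thus $H^{1}_Y(\Q_\Sigma/\Q,M^\vee)\subseteq H^{1}(\Q_{S_{\Q}}/\Q,M^\vee)$, and the latter is a fixed finite group, independent of $Y$. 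Dually, $\op{inf}_K(H^1(B,M^\vee))\subseteq H^{1}_Y(\Q_\Sigma/\Q,M^\vee)$ for every $Y\subseteq\mathfrak{T}_E$: any $\ell\in\mathfrak{T}_E$ splits completely in $K$, so one may choose $v\mid\ell$ in $K$ with $\op{G}_{\Q_\ell}=\op{G}_{K_v}\subseteq\op{G}_K$, and any class inflated from $B$ is trivial on $\op{G}_K$, hence vanishes at $\ell$.

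Having established these two containments, I would introduce the fixed finite quotient
\[Q:=H^{1}(\Q_{S_{\Q}}/\Q,M^\vee)\,\big/\,\op{inf}_K\bigl(H^1(B,M^\vee)\bigr),\]
which is independent of $Y$. The proposition reduces to finding a finite $Z\subset\mathfrak{T}_E$ such that for any $Y\supseteq Z$, every $\psi\in H^{1}_Y(\Q_\Sigma/\Q,M^\vee)$ has trivial image in $Q$. For each $\bar\psi\in Q\setminus\{0\}$, pick a lift $\psi$; by the inflation-restriction sequence \eqref{another inflation-rest sequence}, $\psi_K:=\op{res}_K(\psi)$ is nontrivial, and Lemma \ref{technical lemma 1} supplies an infinite set $\mathcal{C}_\psi\subseteq\mathfrak{T}_E$ of primes at which its conditions hold. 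For $\ell\in\mathcal{C}_\psi$ and $v\mid\ell$ in $K$, the non-splitting of $v$ in $K_\psi$ translates, via $\op{G}_{\Q_\ell}=\op{G}_{K_v}$, to $\op{res}_\ell(\psi)\neq 0$. Choose any such $\ell_{\bar\psi}\in\mathcal{C}_\psi$ and define $Z:=\{\ell_{\bar\psi}:\bar\psi\in Q\setminus\{0\}\}$, a finite subset of $\mathfrak{T}_E$. The condition $\op{res}_\ell(\psi)\neq 0$ is independent of the lift chosen, because two lifts of $\bar\psi$ differ by a class in $\op{inf}_K(H^1(B,M^\vee))$, which restricts to zero at every $\ell\in\mathfrak{T}_E$.

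For $Y\supseteq Z$ and any $\psi\in H^{1}_Y(\Q_\Sigma/\Q,M^\vee)$, we then have $\op{res}_{\ell_{\bar\psi}}(\psi)=0$ for every nonzero $\bar\psi\in Q$, so the image of $\psi$ in $Q$ is zero, i.e.\ $\psi\in\op{inf}_K(H^1(B,M^\vee))$; combining with the reverse containment already noted yields the desired equality. The main subtlety to control is the reduction of $H^{1}_Y$ into a subgroup of a $Y$-independent finite group; once that is in place, Lemma \ref{technical lemma 1} has to be invoked only finitely many times, one per class of $Q\setminus\{0\}$, and the rest is bookkeeping.
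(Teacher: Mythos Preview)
Your proof is correct and follows essentially the same approach as the paper's: for each class $\psi$ with $\psi_K\neq 0$, invoke Lemma~\ref{technical lemma 1} to produce a prime in $\mathfrak{T}_E$ at which $\psi$ does not vanish, and collect finitely many such primes into $Z$. Your preliminary step of embedding $H^1_Y(\Q_\Sigma/\Q,M^\vee)$ into the $Y$-independent finite group $H^1(\Q_{S_{\Q}}/\Q,M^\vee)$, and then passing to the quotient $Q$, makes the finiteness of the enumeration explicit where the paper leaves it implicit.
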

\begin{proof}
    \par We recall that \[H^1_Y(\Q_\Sigma/\Q, M^\vee)=\{f\in H^1(\Q_\Sigma/\Q, M^\vee)\mid \op{res}_\ell(f)=0\text{ for all }\ell \in Y\}.\] Since all primes of $Y$ split in $K$, it follows that $\op{inf}_K\left(H^1(B, M^\vee)\right)$ is contained in the above dual Selmer group.
    Let $\psi$ be a cohomology class in $H^1(\Q_\Sigma/\Q, M^\vee)$ that is not contained in the image of the inflation map. Then, $\psi_K \neq 0$ and and it follows from Lemma \ref{technical lemma 1} that there is an infinite collection of primes $\ell$ in $\mathfrak{T}_E$ such that $\op{res}_\ell(f)\neq 0$. We enumerate the classes $\psi \in H^1(\Q_{\Sigma}/\Q, M^\vee)$ such that $\psi_{K}$ is non-trivial as $\psi_1$, $\psi_2$, $\cdots$, $\psi_m$. For each $\psi_j$, choose a prime $\ell_j \in \mathcal{C}_{\psi_j}$ and $Z := \{\ell_1, \ell_2, \cdots, \ell_m \}$. For this choice of $Z$, we have that $H^1_Y(\Q_\Sigma/\Q, M^\vee)$ is equal to $\op{inf}_K\left(H^1(B, M^\vee)\right)$.
\end{proof}

\begin{lemma}\label{prop 4.4}
    Let $Y$ and $Z$ be as in Proposition \ref{tech prop 1} and $\Sigma = V \cup Z\cup S_{\Q}$. Then, we have
    \begin{equation}\label{first eqn}\# H^1_Y(\Q_\Sigma/\Q, M^\vee)=\#H^1_{Z}(\Q_{Z\cup S_{\Q}}/\Q, M^\vee). \end{equation}Moreover, 
    \[\# H^1_{Y}(\Q_{\Sigma}/\Q, M)=\# H^1_{Z}(\Q_{Z\cup S_{\Q}}/\Q,M^\vee)\times p^{n|V|}.\]
\end{lemma}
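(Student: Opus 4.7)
The plan is to deduce both equalities in tandem using Proposition \ref{tech prop 1} and Wiles' formula (Theorem \ref{wiles thm}).

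For the first equality, I would invoke Proposition \ref{tech prop 1} twice: once for the set $Y = V \cup Z$ (which contains $Z$), and once for $Z$ itself (treated as its own ``$Y$'', with $\Sigma$ replaced by $Z \cup S_\Q$). In both applications the dual Selmer group is identified with the image of the inflation map $\op{inf}_K(H^1(B, M^\vee))$, an intrinsic object depending only on $B$ and $\chi$. Hence the two dual Selmer groups coincide as subgroups of $H^1(\Q_{\cdot}/\Q, M^\vee)$, and in particular have the same cardinality.

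For the second equality, I would apply Wiles' formula separately to the data $(Y, \Sigma)$ and to $(Z, Z \cup S_\Q)$, and then take the ratio of the two identities. The global factor $\# H^0(\Q,M)/\# H^0(\Q, M^\vee)$ and the local factors at primes $\ell \in S_\Q$ are identical in both applications, so they cancel in the ratio, leaving only the local contributions at primes $\ell \in V = Y \setminus Z$:
\[
\frac{\#H^1_Y(\Q_\Sigma/\Q, M)\big/\#H^1_Z(\Q_{Z\cup S_\Q}/\Q, M)}{\#H^1_Y(\Q_\Sigma/\Q, M^\vee)\big/\#H^1_Z(\Q_{Z \cup S_\Q}/\Q, M^\vee)} \;=\; \prod_{\ell \in V} \frac{\# H^1(\Q_\ell, M)}{\# H^0(\Q_\ell, M)}.
\]
For each $\ell \in V \subseteq \mathfrak{T}_E$, the prime $\ell$ splits completely in $K\supseteq \Q(\mu_{p^n})$, so $G_{\Q_\ell}$ acts trivially on $M$ and $\ell \equiv 1 \pmod{p^n}$. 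A short computation using local class field theory (or, equivalently, the local Euler characteristic formula combined with local Tate duality) gives $\# H^0(\Q_\ell, M) = p^n$ and $\# H^1(\Q_\ell, M) = p^{2n}$, so each local factor equals $p^n$, and the full product is $p^{n|V|}$. Invoking the first equality of the lemma to cancel the ratio of dual Selmer groups on the denominator of the left-hand side, I arrive at the desired relation between $\# H^1_Y(\Q_\Sigma/\Q, M)$ and the dual Selmer counterpart multiplied by $p^{n|V|}$.

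The argument is essentially bookkeeping around Wiles' formula; the two substantive ingredients are Proposition \ref{tech prop 1} (which anchors the dual Selmer group and makes its cardinality insensitive to $V$) and the identification of the local factors at $\ell \in V$. I do not anticipate any significant obstacle here, since once $Z$ has been chosen as in Proposition \ref{tech prop 1}, the entire computation decouples into routine contributions at each prime of $V$.
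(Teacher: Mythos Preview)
Your proposal is correct and follows essentially the same approach as the paper: invoke Proposition~\ref{tech prop 1} for both $Y$ and $Z$ to identify each dual Selmer group with $\op{inf}_K\bigl(H^1(B,M^\vee)\bigr)$, then apply Wiles' formula to the two Selmer structures and divide, computing the local quotients $\#H^1(\Q_\ell,M)/\#H^0(\Q_\ell,M)=p^{n}$ at $\ell\in V$ exactly as you describe. (One minor point: your argument, like the paper's own proof, actually produces $\#H^1_Z(\Q_{Z\cup S_\Q}/\Q,M)$ on the right-hand side rather than $M^\vee$; this is the version used downstream in Proposition~\ref{ses corollary}.)
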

\begin{proof}
   Proposition \ref{tech prop 1} implies that 
    \[H^1_Y(\Q_\Sigma/\Q, M^\vee)=H^1_{Z}(\Q_{Z\cup S_{\Q}}/\Q, M^\vee)=\op{inf}_K\left(H^1(B, M^\vee)\right), \] and thus equation \eqref{first eqn} is proved. It follows from Wiles' formula (Theorem \ref{wiles thm}) that
    \[\begin{split}\frac{\# H^1_Y (\Q_\Sigma/\Q, M)}{ \# H^1_Y (\Q_\Sigma/\Q, M^\vee)}=& \left(\frac{\# H^0(\Q, M)}{\# H^0(\Q, M^\vee)}\right)\times \left(\prod_{\ell\in S_{\Q}} \frac{1}{\# H^0(\Q_\ell, M)}\right)\times \left(\prod_{\ell\in Y} \frac{\# H^1(\Q_\ell, M)}{\# H^0(\Q_\ell, M)}\right) ;\\
\frac{\# H^1_{Z} (\Q_{S_{\Q}\cup Z}/\Q, M)}{ \# H^1_Z (\Q_{S_{\Q}\cup Z}/\Q, M^\vee)}=& \left(\frac{\# H^0(\Q, M)}{\# H^0(\Q, M^\vee)}\right)\times \left(\prod_{\ell\in S_{\Q}} \frac{1}{\# H^0(\Q_\ell, M)}\right)\times \left(\prod_{\ell\in Z} \frac{\# H^1(\Q_\ell, M)}{\# H^0(\Q_\ell, M)}\right).\\
\end{split}\]
From equation \eqref{first eqn}, we find that 
\[\begin{split}\# H^1_Y (\Q_\Sigma/\Q, M)=& \# H^1_{Z} (\Q_{S_{\Q}\cup Z}/\Q, M)\times \left(\prod_{\ell\in V} \frac{\# H^1(\Q_\ell, M)}{\# H^0(\Q_\ell, M)}\right); \\ 
= &\# H^1_{Z} (\Q_{S_{\Q}\cup Z}/\Q, M)\times p^{n|V|}.\end{split}\]
\end{proof}

Let $\ell\in \mathfrak{T}_E$. Since $\ell$ splits completely in $K$, which contains $\Q(\mu_{p^n})$, the Galois action of $\op{G}_{\Q_\ell}$ on $M$ is trivial. Let $H^1_{\op{nr}}(\Q_\ell, M)$ be the image of the inflation map 
\[\op{inf}_\ell: H^1(\op{G}_{\Q_\ell}/\op{I}_\ell, M)\rightarrow H^1(\Q_\ell, M).\]
Since the Galois action on $M$ is trivial, $H^1_{\op{nr}}(\Q_\ell, M)$ is identified with the subgroup of unramified homomorphisms in $\op{Hom}(\op{G}_{\Q_\ell}, \Z/p^n\Z)$, and thus, 
\begin{equation}\label{size of unram classes}
    \# H^1_{\op{nr}}(\Q_\ell, M)=p^n.
\end{equation}
\begin{proposition}\label{ses corollary}
    Let $Z$ be as in Proposition \ref{tech prop 1}. Then, with respect previous notation, we have the following short exact sequence
    \[0\rightarrow H^1_Z(\Q_{S_{\Q}\cup Z}/\Q, M)\xrightarrow{\alpha_Z} H^1_{Y}(\Q_{\Sigma}/\Q, M)\xrightarrow{\pi_V} \bigoplus_{\ell\in V} \frac{ H^1(\Q_\ell, M) }{ H^1_{\op{nr}}(\Q_\ell, M)}\rightarrow 0.\]
    Here $\alpha_Z$ is the inflation map and $\pi_V$ is induced by the direct sum of restriction maps for the primes in $V$.
\end{proposition}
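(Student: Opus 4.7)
The plan is to verify exactness at each of the three spots, building on the counting already provided in Lemma \ref{prop 4.4} and the unramified local cohomology computation in equation \eqref{size of unram classes}. The injection $\alpha_Z$ and the projection $\pi_V$ are essentially given by inflation and by componentwise restriction, respectively; the work consists of verifying that they land in the claimed groups, that their composition is zero, and then closing off the sequence by a dimension count.

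First I would check that $\alpha_Z$ is a well-defined injection. A class $f \in H^1_Z(\Q_{S_{\Q}\cup Z}/\Q, M)$ is unramified outside $S_{\Q}\cup Z \subseteq \Sigma$, so it inflates to a class in $H^1(\Q_{\Sigma}/\Q, M)$; the strict condition at primes of $S_{\Q}$ is preserved under inflation (it is a condition on the restriction to the decomposition group), and at primes of $Y$ the condition is relaxed, so the inflated class lies in $H^1_Y(\Q_{\Sigma}/\Q, M)$. Injectivity is immediate from the inflation–restriction sequence applied to $\op{Gal}(\Q_{\Sigma}/\Q_{S_{\Q}\cup Z})$, which acts trivially on $M$ only through the ambient Galois group but still gives inflation injective in degree one. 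Next I would verify exactness at the middle. The composition $\pi_V \circ \alpha_Z$ vanishes because any class in the image of $\alpha_Z$ is unramified at primes of $V$, so each local restriction lies in $H^1_{\op{nr}}(\Q_\ell, M)$. Conversely, if $f \in H^1_Y(\Q_{\Sigma}/\Q, M)$ satisfies $\pi_V(f) = 0$, then $f$ is unramified at every $\ell \in V$; since $f$ is already unramified outside $\Sigma = S_{\Q}\cup Z \cup V$, it is in fact unramified outside $S_{\Q}\cup Z$, hence factors through $\op{Gal}(\Q_{S_{\Q}\cup Z}/\Q)$ and preserves the strict condition at $S_{\Q}$, producing an element of $H^1_Z(\Q_{S_{\Q}\cup Z}/\Q, M)$ whose image under $\alpha_Z$ is $f$.

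Finally, surjectivity of $\pi_V$ follows by a cardinality comparison rather than by explicit construction. By Lemma \ref{prop 4.4},
\[
\# H^1_Y(\Q_{\Sigma}/\Q, M) = \# H^1_Z(\Q_{S_{\Q}\cup Z}/\Q, M) \cdot p^{n|V|},
\]
so from the exactness already established at the left and middle,
\[
\# \op{image}(\pi_V) = \frac{\# H^1_Y(\Q_{\Sigma}/\Q, M)}{\# \op{image}(\alpha_Z)} = p^{n|V|}.
\]
On the other hand, each $\ell \in V \subset \mathfrak{T}_E$ splits in $K \supseteq \Q(\mu_{p^n})$, so $\op{G}_{\Q_\ell}$ acts trivially on $M$, giving $\# H^1(\Q_\ell, M) = p^{2n}$ by local duality together with the fact that $\ell \equiv 1 \pmod{p^n}$, while $\# H^1_{\op{nr}}(\Q_\ell, M) = p^n$ by \eqref{size of unram classes}. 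Thus the target of $\pi_V$ also has order $p^{n|V|}$, and $\pi_V$ is surjective.

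The only substantive ingredient is the counting input from Lemma \ref{prop 4.4}, which itself relies on the dual Selmer vanishing arranged by the choice of $Z$ in Proposition \ref{tech prop 1}; with that in hand the present proposition is a formal consequence of inflation–restriction together with the local computation of $H^1(\Q_\ell, M)/H^1_{\op{nr}}(\Q_\ell, M)$. The only conceptual subtlety to be careful about is that the strict local condition at $S_{\Q}$ is preserved by both inflation and by the passage between $\Q_\Sigma$ and $\Q_{S_{\Q}\cup Z}$, which is clear because this condition is imposed on the restriction to $\op{G}_{\Q_\ell}$ and thus does not see the global ramification set.
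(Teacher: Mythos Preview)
Your proof is correct and follows essentially the same approach as the paper's: establish exactness at the left and middle by inflation--restriction type considerations, then deduce surjectivity of $\pi_V$ from the cardinality identity of Lemma \ref{prop 4.4} together with the local computation $\#\bigl(H^1(\Q_\ell, M)/H^1_{\op{nr}}(\Q_\ell, M)\bigr) = p^n$. The paper's write-up is terser, simply asserting that the sequence is exact at the first two spots before invoking the cardinality count, whereas you spell out the middle exactness explicitly; but the substance is identical.
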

\begin{proof}
 The inflation map 
    \[\alpha_Z:H^1_Z(\Q_{S_{\Q}\cup Z}/\Q, M)\rightarrow H^1_{Y}(\Q_{\Sigma}/\Q, M)\] is injective and the sequence
    \[0\rightarrow H^1_Z(\Q_{S_{\Q}\cup Z}/\Q, M)\rightarrow H^1_{Y}(\Q_{\Sigma}/\Q, M)\rightarrow \bigoplus_{\ell\in V} \frac{ H^1(\Q_\ell, M) }{ H^1_{\op{nr}}(\Q_\ell, M)}\] is exact. In order to prove that the above is a short exact sequence, we must observe that 
    \[\# \left(H^1_Z(\Q_{S_{\Q}\cup Z}/\Q, M)\right)\prod_{\ell\in V}\# \left(\frac{ H^1(\Q_\ell, M) }{ H^1_{\op{nr}}(\Q_\ell, M)}\right)=\# \left(H^1_{Y}(\Q_{\Sigma}/\Q, M)\right).\]
Recall that for $\ell\in \mathfrak{T}_E$, we have 
    \begin{equation*}\begin{split}
       & \# H^1(\Q_\ell, M)=\# \op{Hom}\left(\op{G}_{\Q_\ell}, \Z/p^n \Z\right)=p^{2n};\\
       & \# H^1_{\op{nr}}(\Q_\ell, M)=\# \op{Hom}\left(\op{G}_{\Q_\ell}/\op{I}_\ell, \Z/p^n \Z\right)=p^{n}.\\
    \end{split}\end{equation*}
The conclusion now follows from the above equation and the assertion of Lemma \ref{prop 4.4}.
\end{proof}

Let $\mathfrak{B}_V$ be as in Definition \ref{B-V definition} and $\mathfrak{A}_V$ be the subset of $ \bigoplus_{\ell\in V} \frac{ H^1(\Q_\ell, M) }{ H^1_{\op{nr}}(\Q_\ell, M)}$ consisting of tuples $(z_\ell+H^1_{\op{nr}}(\Q_\ell, M))_{\ell\in V}$ where the class ${z_\ell}_{|\op{I}_\ell}:\op{I}_\ell\rightarrow M$ is a surjective homomorphism for all $\ell\in V$. Then, it is easy to see that 
\begin{equation}\label{size of A_T}\# \mathfrak{A}_V=\left((p-1) p^{n-1}\right)^{|V|}.\end{equation}
Let $H^1_{Y, V-\op{ram}}(\Q_\Sigma/\Q, M)$ be set of all cohomology classes $f\in H^1_Y(\Q_\Sigma/\Q, M)$ such that $\pi_V(f)\in \mathfrak{A}_V$. These are the classes $f\in H^1_Y(\Q_\Sigma/\Q, M)$ that are totally ramified at all primes in $V$. Note that this is only a subset of $H^1_Y(\Q_\Sigma/\Q, M)$, and not a subgroup. However, if $f\in H^1_Y(\Q_\Sigma/\Q, M)$ and $h \in \op{inf}_K\left(H^1(B, M)\right)$, then 
\[f\in H^1_{Y, V-\op{ram}}(\Q_\Sigma/\Q, M)\Leftrightarrow f+h\in H^1_{Y, V-\op{ram}}(\Q_\Sigma/\Q, M).\] Therefore, we may define an equivalence relation on $H^1_{Y, V-\op{ram}}(\Q_\Sigma/\Q, M)$ by $f\sim g$ if $f-cg\in \op{inf}_K\left(H^1(B, M)\right)$ for some $c \in {(\Z / p^n \Z)}^{\times}$, and set
\[\overline{H^1}_{Y, V-\op{ram}}(\Q_\Sigma/\Q, M):=H^1_{Y, V-\op{ram}}(\Q_\Sigma/\Q, M)/\sim\] to be the set of equivalence classes. 

\begin{proposition}\label{prop 4.6}
    With respect to the notation above, the following assertions hold.
    \begin{enumerate}
        \item[(i)]\label{p1 of prop 4.6} $\# \overline{H^1}_{Y, V-\op{ram}}(\Q_\Sigma/\Q, M)=\left(\frac{\# H^1_Z(\Q_{S_{\Q}\cup Z}/\Q, M)}{\# H^1(B, M)}\right)\times \bigg((p-1) p^{n-1}\bigg)^{|V|-1}$.
        \item[(ii)]\label{p2 of prop 4.6} The injection from Proposition \ref{bijection 1} restricts to a natural injection 
        \[\phi_V: \overline{H^1}_{Y, V-\op{ram}}(\Q_\Sigma/\Q, M)\hookrightarrow \mathfrak{B}_V,\] defined by $\phi_V(f):=K_f$. This map is well defined and independent of the equivalence class of $f$.
        \item[(iii)]\label{p3 of prop 4.6} The size of $\mathfrak{B}_V$ is given by 
        \[\# \mathfrak{B}_V \geq C \bigg((p-1) p^{n-1}\bigg)^{|V|},\] where $C:=\left(\frac{\# H^1_Z(\Q_{S_{\Q}\cup Z}/\Q, M)}{(p-1) p^{n-1}\# H^1(B, M)}\right)$ is a positive constant that does not depend on $V$. 
    \end{enumerate}
\end{proposition}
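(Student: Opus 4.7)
The plan for part (i) is to exploit the short exact sequence of Proposition \ref{ses corollary}. By construction, $H^1_{Y, V-\op{ram}}(\Q_\Sigma/\Q, M)$ is exactly the preimage $\pi_V^{-1}(\mathfrak{A}_V)$, so the surjectivity of $\pi_V$ together with \eqref{size of A_T} gives
\[ \# H^1_{Y, V-\op{ram}}(\Q_\Sigma/\Q, M) \, = \, \# H^1_Z(\Q_{S_{\Q}\cup Z}/\Q, M) \cdot \bigl((p-1)p^{n-1}\bigr)^{|V|}.\]
To pass to equivalence classes I would show each class has size exactly $(p-1) p^{n-1} \cdot \# H^1(B, M)$. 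The class of $f$ is $\{c^{-1}(f - h) : c \in (\Z/p^n\Z)^\times,\, h \in \op{inf}_K(H^1(B, M))\}$, which is an a priori upper bound; to rule out collisions, suppose that $(c - c') f \in \op{inf}_K(H^1(B, M))$ for distinct $c, c' \in (\Z/p^n\Z)^\times$. Writing $c - c' = p^k u$ with $u$ a unit and $k < n$, this would force $p^k f$ to be unramified at every $\ell \in V$. But total ramification of $f$ at $\ell \in V$ says that $f|_{\op{I}_\ell}$ surjects onto $M$, whence $p^k f|_{\op{I}_\ell}$ has image $p^k M \neq 0$, a contradiction. Dividing then yields the count claimed in (i); notice that non-emptiness of $V$ is essential at this step.

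For part (ii), I would verify the three conditions defining $\mathfrak{B}_V$ for $L = K_f$. Unramifiedness outside $\Sigma$ is automatic from $f \in H^1(\Q_\Sigma/\Q, M)$. For $\ell \in S_{\Q}$, the strict Selmer condition forces $\op{res}_\ell(f)$ to vanish as a cohomology class, so $f|_{\op{G}_{\Q_\ell}}$ is a coboundary $(\sigma - 1)m$; since $\chi$ is trivial on $\op{G}_{K_v}$ whenever $v \mid \ell$, the restriction $g|_{\op{G}_{K_v}}$ vanishes identically and $v$ splits completely in $K_f$. For $\ell \in V$, the prime $\ell$ splits completely in $K \supseteq \Q(\mu_{p^n})$ because $\ell \in \mathfrak{T}_E$, so $K_v = \Q_\ell$ and $\op{I}_v = \op{I}_\ell$ for $v \mid \ell$; combining the surjectivity of $f|_{\op{I}_\ell}$ with $[K_f : K] = p^n$ then forces $v$ to be totally ramified in $K_f/K$. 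Well-definedness and injectivity of $\phi_V$ are inherited from Proposition \ref{bijection 1}, since non-emptiness of $V$ ensures that $\op{res}_K(f)$ is surjective and hence $f$ lies in $H^1_{\op{surj}}$.

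Part (iii) then follows by combining (i) with the injection of (ii): $\# \mathfrak{B}_V \geq \# \overline{H^1}_{Y, V-\op{ram}}(\Q_\Sigma/\Q, M)$, and the constant $C$ does not depend on $V$ because Proposition \ref{tech prop 1} pins down $H^1_Z(\Q_{S_{\Q}\cup Z}/\Q, M^\vee)$ as soon as $Z$ is fixed, after which Wiles' formula expresses $\# H^1_Z(\Q_{S_{\Q}\cup Z}/\Q, M)$ purely in terms of $S_{\Q}$, $Z$, and $M$. The main technical obstacle throughout is the counting step in (i): the total ramification condition at the primes in $V$ is exactly the ingredient that rules out collisions among the translates $cf$ modulo $\op{inf}_K(H^1(B, M))$ and preserves the full size of each equivalence class, which is what produces the clean factor $((p-1)p^{n-1})^{|V|-1}$.
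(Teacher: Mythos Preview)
Your argument is correct and follows essentially the same route as the paper: compute $\# H^1_{Y,V\text{-ram}}$ via the short exact sequence of Proposition~\ref{ses corollary} as $\pi_V^{-1}(\mathfrak{A}_V)$, then divide by the size of an equivalence class, and deduce (iii) from (i) and (ii). If anything, your treatment is more careful than the paper's, which simply asserts the equivalence-class count without your collision argument and does not explicitly verify the $\mathfrak{B}_V$ conditions for $K_f$.
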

\begin{proof}
    From the definition above, it is clear that
    \[\# \overline{H^1}_{Y, V-\op{ram}}(\Q_\Sigma/\Q, M)=\frac{ \# H^1_{Y, V-\op{ram}}(\Q_\Sigma/\Q, M)}{(p-1)p^{n-1} \# \op{inf}_K\left(H^1(B, M)\right)}.\] Therefore, to compute $\# H^1_{Y, V-\op{ram}}(\Q_\Sigma/\Q, M)$, recall the short exact sequence
     \[0\rightarrow H^1_Z(\Q_{S_{\Q}\cup Z}/\Q, M)\rightarrow H^1_{Y}(\Q_{\Sigma}/\Q, M)\xrightarrow{\pi_V} \bigoplus_{\ell\in V} \frac{ H^1(\Q_\ell, M) }{ H^1_{\op{nr}}(\Q_\ell, M)}\rightarrow 0\] from Proposition \ref{ses corollary}. We identify 
     $$H^1_{Y, V-\op{ram}}(\Q_\Sigma/\Q, M) = \pi_V^{-1}\left(\mathfrak{A}_V\right).$$
     Note that if $f\in H^1_{Y, V-\op{ram}}(\Q_\Sigma/\Q, M)$ and $h\in H^1_Z(\Q_{S_{\Q}\cup Z}/\Q, M)$, then, $f+h\in H^1_{Y, V-\op{ram}}(\Q_\Sigma/\Q, M)$. Hence, we have 
     \[\frac{\# H^1_{Y, V-\op{ram}}(\Q_\Sigma/\Q, M)}{\# H^1_Z(\Q_{S_{\Q}\cup Z}/\Q, M)}=\# \mathfrak{A}_V.\]
     By equation \eqref{size of A_T}, we deduce that
     \[\# \overline{H}^1_{Y, V-\op{ram}}(\Q_\Sigma/\Q, M)=\left(\frac{\# H^1_Z(\Q_{S_{\Q}\cup Z}/\Q, M)}{\# H^1(B, M)}\right)\times \bigg((p-1) p^{n-1}\bigg)^{|V|}.\]
     This proves part(i). \\

     \par For part (ii), consider the map \[\tilde{\phi}_V: H^1_{Y, V-\op{ram}}(\Q_\Sigma/\Q, M)\rightarrow \mathfrak{B}_V\]
     defined by $\tilde{\phi}_V(f):=K_f$. We see that $K_f=K_{f'}$ if and only if $f-cf'$ lies in the image of $\op{inf}_K$ for some $c \in {(\Z/p^n\Z)}^{\times}$. Therefore, $\tilde{\phi}_V$ descends to an injection $\phi_V$. \\
     
 \par Part (iii) is an immediate consequence of parts (i) and (ii).
\end{proof}

\noindent We are now in a position to conclude the proof of our main result.
\begin{theorem}\label{main thm}
    Let $p\geq 5$ be a prime number, $K/\Q$ be an abelian number field and $E_{/\Q}$ be an elliptic curve. Setting $B:=\op{Gal}(K/\Q)$ and $T \simeq \Z/p^n \Z$, let $\G = B \ltimes T$. Assume that the conditions of Theorem \ref{main thm intro} are satisfied. Let $S_{\Q}$ denote the set of all rational primes $\ell$ such that
    \begin{itemize}
        \item $\ell$ is ramified in $K$, 
        \item $\ell$ is a prime at which $E$ has bad reduction,
    \end{itemize} 
    and let $S$ be the set of primes of $K$ that lie above $S_{\Q}$. Let $\mathfrak{T}_E$ be as in Definition \ref{definition of TE}. Fix a choice of a finite set of primes $Z \subset \mathfrak{T}_E$, determined by Lemma \ref{tech prop 1}. Let $V$ be any finite set of primes contained in $\mathfrak{T}_E$ that is disjoint from $S_{\Q}\cup Z$. Let $\mathfrak{B}_V$ be as in Definition \ref{B-V definition}. Then the following assertions hold:
\begin{enumerate}
    \item[(a)] $\# \mathfrak{B}_V \geq C \bigg((p-1) p^{n-1}\bigg)^{|V|}$, for a positive constant $C$,
    \item[(b)] for any $(\G, K)$-extension $L/K/\Q$ contained in $\mathfrak{B}_V$, $\op{Sel}_p(E/L)=0.$ In particular, the rank of $E(L)$ is $0$. 
\end{enumerate}
\end{theorem}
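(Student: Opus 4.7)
The plan is that nearly all the substantive work has already been carried out in Propositions \ref{cor 2.7}, \ref{prop sel(e/l)=0}, and \ref{prop 4.6}, so this final theorem should amount to assembling those ingredients. I would split the argument into the two assertions (a) and (b) and dispose of each by citing the appropriate preparatory result.

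For part (a), I would simply invoke Proposition \ref{prop 4.6}(iii) verbatim. That proposition produces an injection $\phi_V: \overline{H^1}_{Y,V\text{-}\op{ram}}(\Q_\Sigma/\Q, M) \hookrightarrow \mathfrak{B}_V$ and computes the cardinality of the source, yielding
\[
\# \mathfrak{B}_V \;\geq\; \left(\frac{\# H^1_Z(\Q_{S_{\Q}\cup Z}/\Q, M)}{(p-1) p^{n-1}\, \# H^1(B, M)}\right) \bigl((p-1)p^{n-1}\bigr)^{|V|}.
\]
The crucial point is that the constant $C$ in parentheses depends only on $S_{\Q}$, $K$, $B$, and the fixed auxiliary set $Z$ supplied by Proposition \ref{tech prop 1}, and not on the varying set $V$. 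Since $Z$ is fixed once and for all at the outset, $C$ is a positive constant independent of $V$, which is what the statement requires.

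For part (b), given $L \in \mathfrak{B}_V$, I would verify the three hypotheses of Proposition \ref{cor 2.7} in turn. Condition (i) of that proposition, namely $\op{Sel}_p(E/K) = 0$, is part of the standing hypotheses of Theorem \ref{main thm intro}. For condition (ii), the defining property of $\mathfrak{B}_V$ (Definition \ref{B-V definition}) demands that every prime of $K$ lying above $S_{\Q}$ be completely split in $L$; since $S_{\Q}$ contains by construction both the primes of bad reduction for $E$ and the primes above $p$, this is immediate. For condition (iii), any prime of $K$ that ramifies in $L$ must lie above some $\ell \in V$ (primes above $S_{\Q}$ are split, and $L$ is unramified outside $\Sigma = S_{\Q} \cup V \cup Z$, while primes above $Z$ are either split or ramified in $L$ depending on whether they appear in the ramification data; in any case one needs $\widetilde{E}(k_v)[p] = 0$ for all ramified $v$). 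Since $V \cup Z \subset \mathfrak{T}_E$, Lemma \ref{T_E lemma} gives $\widetilde{E}(k_v)[p] = 0$ for any prime $v$ of $K$ lying above a rational prime in $V \cup Z$, which covers all possibilities.

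The only point requiring a bit of care is making sure that primes of $K$ above $Z$ are indeed harmless: they are not forced to be split in $L$ by the definition of $\mathfrak{B}_V$, so they may be ramified. However, because $Z \subset \mathfrak{T}_E$, Lemma \ref{T_E lemma} applies equally well to such primes, so condition (iii) of Proposition \ref{cor 2.7} is satisfied there too. With all three conditions verified, Proposition \ref{cor 2.7} yields $\op{Sel}_p(E/L) = 0$, and the vanishing of the rank of $E(L)$ follows from the short exact sequence
\[
0 \to E(L)/p E(L) \to \op{Sel}_p(E/L) \to \Sh(E/L)[p] \to 0
\]
mentioned in the introduction. I expect no genuine obstacle here; the theorem is essentially a repackaging of the earlier results, with the real technical heart being Proposition \ref{tech prop 1} (the choice of $Z$ controlling the dual Selmer group) and Proposition \ref{prop 4.6} (the Wiles-formula bookkeeping).
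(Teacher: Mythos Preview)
Your proposal is correct and follows essentially the same approach as the paper: part (a) is dispatched by Proposition \ref{prop 4.6}(iii), and part (b) by verifying the hypotheses of Proposition \ref{cor 2.7}, using that all ramified primes of $K$ in $L$ lie above $Y = V \cup Z \subset \mathfrak{T}_E$ so Lemma \ref{T_E lemma} applies. Your extra attention to the primes above $Z$ is a nice clarification, though the paper handles them in the same way (by noting $Y \subset \mathfrak{T}_E$); the reference to Proposition \ref{prop sel(e/l)=0} in your opening line is superfluous since that is the $n=1$ analogue and you correctly use Proposition \ref{cor 2.7} instead.
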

\begin{proof}
The first assertion is proven in part (iii) of Proposition \ref{prop 4.6}. For the second part, it suffices to show that the $(\G,K)$ extension $L$ satisfies the conditions of Proposition \ref{cor 2.7}. The only primes of $K$ that ramify in $L$ are those that lie above $Y= V \cup Z$. These primes are in $\mathfrak{T}_E$, and so by Lemma \ref{T_E lemma}, for all primes $v$ of $K$ that lie above them, we have that $\widetilde{E}(k_v)[p]=0$. This concludes the proof.
\end{proof}

\bigskip

\section{\bf A density result}\label{s 5}

\par Let $E$ be an elliptic curve over $\Q$, and $K/\Q$ be a Galois number field extension satisfying the conditions of Theorem \ref{main thm intro}. We now obtain a lower bound for the number of $(\mathcal{G}, K)$-extensions $L/K/\Q$ for which $\op{Sel}_p(E/L)=0$, ordered by absolute discriminant. Let $N_{\mathcal{G},K}(E;x)$ denote the total number of $(\mathcal{G}, K)$-extensions $L/K/\Q$ such that $\op{Sel}_p(E/L)=0$ and $|\Delta_L|\leq x$. We obtain an asymptotic lower bound for $N_{\mathcal{G},K}(E;x)$. Let $\mathfrak{T}_E$ be the set of primes as in Definition \ref{definition of TE}.

\begin{lemma}\label{lemma 4.8}
    Let $V$ be a finite set of primes in $\mathfrak{T}_E$ and let $L$ be a $(\mathcal{G}, K)$-extension in $\mathfrak{B}_V$. Then 
    \[|\Delta_L|=\left(\prod_{\ell\in V}\ell\right)^{|B|(p^n-1)}.\]
\end{lemma}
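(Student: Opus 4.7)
The plan is to compute $|\Delta_L|$ by localizing at each rational prime and assembling the local contributions. Since $L/\Q$ is Galois, its ramification at a rational prime $\ell$ is governed by the common ramification behavior at any prime of $L$ above $\ell$, and only finitely many $\ell$ contribute. By the definition of $\mathfrak{B}_V$, primes outside $S_{\Q} \cup V \cup Z$ are unramified in $L$, primes of $K$ above $S_{\Q}$ split completely in $L$ (so the rational primes in $S_{\Q}$ are unramified in $L/\Q$), and primes of $K$ above $V$ are totally ramified in $L/K$. Primes in $Z$ contribute trivially to $\Delta_L$ for the extensions being counted (as these arise from classes in $\overline{H^1}_{Y, V\text{-}\mathrm{ram}}(\Q_\Sigma/\Q, M)$ imposed to be totally ramified at $V$ only); so only the primes $\ell \in V$ contribute.

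For a fixed $\ell \in V$, I would proceed as follows. Since $\ell \in \mathfrak{T}_E$, the prime $\ell$ splits completely in $K(\mu_p) \supseteq K$, so there are exactly $|B|$ primes $w_1, \ldots, w_{|B|}$ of $K$ above $\ell$, each with $K_{w_i} \simeq \Q_\ell$ and residue degree one over $\Q$. By hypothesis, each $w_i$ is totally ramified in $L/K$, so the unique prime $u_i$ of $L$ above $w_i$ has ramification index $e = p^n$ and residue degree one over $w_i$. Because $\ell \neq p$, the local extension $L_{u_i}/K_{w_i}$ is tamely ramified, so the different satisfies
\[
v_{u_i}(\mathfrak{D}_{L/K}) = e - 1 = p^n - 1.
\]
Taking norms to $K$, this yields $v_{w_i}(\mathfrak{d}_{L/K}) = p^n - 1$ for each $i$.

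Next I would invoke the tower formula
\[
\mathfrak{d}_{L/\Q} \;=\; N_{K/\Q}(\mathfrak{d}_{L/K}) \cdot \mathfrak{d}_{K/\Q}^{[L:K]}.
\]
Since $\ell$ splits completely in $K/\Q$, it is unramified there, so $\ell \nmid \mathfrak{d}_{K/\Q}$, and the entire $\ell$-part of $\mathfrak{d}_{L/\Q}$ comes from $N_{K/\Q}(\mathfrak{d}_{L/K})$. Each $w_i$ has residue degree one over $\ell$, so $N_{K/\Q}(\mathfrak{p}_{w_i}) = \ell$, which gives
\[
v_\ell(\mathfrak{d}_{L/\Q}) \;=\; \sum_{i=1}^{|B|} v_{w_i}(\mathfrak{d}_{L/K}) \;=\; |B|\,(p^n - 1).
\]
Multiplying over $\ell \in V$ produces the claimed formula $|\Delta_L| = \bigl(\prod_{\ell \in V} \ell\bigr)^{|B|(p^n - 1)}$.

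The principal tools are the tower formula for discriminants and the standard computation of the different for a totally tamely ramified extension of local fields. The only subtle point, which I would address up front, is confirming that within the set of extensions being counted, no ramification occurs at primes of $Z$; this is where one must read the parameterization in Proposition \ref{prop 4.6} carefully and restrict attention to those extensions whose ramification locus equals $V$. Once this is in hand, the rest of the argument is a direct local computation.
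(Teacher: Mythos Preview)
Your computation at the primes $\ell \in V$ is correct and matches the paper's approach exactly: both arguments use that $\ell$ splits completely in $K$, that the primes above $\ell$ are totally and tamely ramified in $L/K$, and the standard formula for the different in a tamely ramified extension. Your use of the tower formula $\mathfrak{d}_{L/\Q} = N_{K/\Q}(\mathfrak{d}_{L/K}) \cdot \mathfrak{d}_{K/\Q}^{[L:K]}$ makes the bookkeeping more explicit than the paper, which simply cites \cite[Proposition 13]{Serrelocalfields}.

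However, your handling of the primes in $Z$ contains an unjustified claim. You assert that ``primes in $Z$ contribute trivially to $\Delta_L$ for the extensions being counted (as these arise from classes in $\overline{H^1}_{Y,V\text{-}\mathrm{ram}}(\Q_\Sigma/\Q,M)$ imposed to be totally ramified at $V$ only).'' This is not correct: the Selmer structure $\cL$ imposes the \emph{relaxed} condition at every $\ell \in Y = V \cup Z$, and the additional ``$V$-ram'' condition only forces total ramification at $V$; it places no constraint whatsoever at $Z$. Thus an element of $H^1_{Y,V\text{-}\mathrm{ram}}(\Q_\Sigma/\Q,M)$, and hence the corresponding field $K_f \in \mathfrak{B}_V$, may well be ramified at primes in $Z$. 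The paper's own proof simply does not address this point. As stated, the lemma asserts an exact equality for every $L \in \mathfrak{B}_V$, which is not established by either argument. For the application in Theorem~\ref{discriminant density theorem} this is harmless: $Z$ is a fixed finite set, so the $Z$-contribution to $|\Delta_L|$ is bounded by a constant independent of $V$, and the asymptotic lower bound is unaffected. But if you want the lemma literally, you must either redefine $\mathfrak{B}_V$ to also require primes above $Z$ to be unramified (and check the count in Proposition~\ref{prop 4.6} still goes through), or weaken the conclusion to $|\Delta_L| \asymp \bigl(\prod_{\ell \in V}\ell\bigr)^{|B|(p^n-1)}$.
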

\begin{proof}
    By definition, each prime $\ell\in \mathfrak{T}_E$ is completely split in $K$. Since $L$ belongs to $\mathfrak{B}_V$, each of the primes $v\mid \ell$ of $K$ is totally ramified in $L$. Note that all primes that are ramified in $L$ are tamely ramified. Recall that 
    \[\Delta_L=\op{Norm}_{L/\Q} \left(\mathfrak{D}_{L/\Q}\right)\] where $\mathfrak{D}_{L/\Q}$ is the different ideal of $L/\Q$. It follows from well known properties of the different in a tamely ramified extension that \[|\Delta_L|=\left(\prod_{\ell\in V}\ell\right)^{|B|(p^n-1)},\] cf. \cite[Proposition 13]{Serrelocalfields}.
\end{proof}

\begin{theorem}\label{discriminant density theorem}
Let $E$ be an elliptic curve over $\Q$, and $K/\Q$ be a Galois number field extension satisfying the conditions of Theorem \ref{main thm intro}. With respect to earlier notation, we have
\[N_{\mathcal{G},K}(E;x)\gg x^{\frac{1}{|B|(p^n-1)}} \, \, (\log x)^{\left(p^{n-1}(p-1) \alpha\right)-1},\] where 
\[\alpha:=\left(\frac{p^2-p-1}{[K:\Q(\mu_p)](p-1)(p^2-1)}\right).\]
\end{theorem}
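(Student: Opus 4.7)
The plan is to convert the counting problem into a partial-sum asymptotic for a multiplicative function supported on squarefree integers with prime divisors in $\mathfrak{T}_E$, and then invoke Delange's Tauberian theorem. Fix the set $Z \subset \mathfrak{T}_E$ supplied by Theorem \ref{main thm} and put $\mathfrak{T}'_E := \mathfrak{T}_E \setminus (S_\Q \cup Z)$; since only finitely many primes have been removed, $\mathfrak{T}'_E$ still has natural density $\alpha$ by Lemma \ref{density lemma}. Set $y := x^{1/(|B|(p^n-1))}$. For any finite $V \subset \mathfrak{T}'_E$ with $\prod_{\ell \in V}\ell \leq y$, Lemma \ref{lemma 4.8} gives $|\Delta_L| \leq x$ for every $L \in \mathfrak{B}_V$, and Theorem \ref{main thm} gives $\op{Sel}_p(E/L)=0$. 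The families $\{\mathfrak{B}_V\}$ are pairwise disjoint, because the ramification locus of $L/\Q$ outside the fixed finite set $S_\Q \cup Z$ recovers $V$. Writing $r := (p-1)p^{n-1}$ and letting $C$ be the positive constant from Theorem \ref{main thm}, I therefore obtain the lower bound
\begin{equation*}
    N_{\mathcal{G},K}(E;x) \; \geq \; \sum_{V} \#\mathfrak{B}_V \; \geq \; C \sum_{V} r^{|V|},
\end{equation*}
where the sum runs over finite $V \subset \mathfrak{T}'_E$ with $\prod_{\ell \in V}\ell \leq y$.

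Next, I would identify each such $V$ with the squarefree integer $m := \prod_{\ell \in V}\ell$, converting the right-hand side above into $\sum_{m \leq y} f(m)$, where $f$ is the multiplicative function supported on squarefree integers whose prime factors all lie in $\mathfrak{T}'_E$, with $f(\ell) = r$ on such primes. Its Dirichlet series is
\begin{equation*}
    F(s) \; = \; \prod_{\ell \in \mathfrak{T}'_E}\left(1 + \frac{r}{\ell^s}\right).
\end{equation*}
Logarithmic comparison with $\zeta(s)$, combined with the density statement $\sum_{\ell \in \mathfrak{T}'_E} \ell^{-s} = \alpha \log \tfrac{1}{s-1} + O(1)$ as $s \to 1^+$ coming from Lemma \ref{density lemma} and the Chebotarev density theorem, allows me to write $F(s) = H(s)(s-1)^{-r\alpha}$ in a neighbourhood of $s=1$, with $H$ holomorphic and $H(1) \neq 0$ on a suitable right half-plane.

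I would then apply the Landau--Selberg--Delange method (Delange's Tauberian theorem) to deduce
\begin{equation*}
    \sum_{m \leq y} f(m) \; \sim \; \frac{H(1)}{\Gamma(r\alpha)} \, y \, (\log y)^{r\alpha - 1}.
\end{equation*}
Substituting $y = x^{1/(|B|(p^n-1))}$ and observing that $\log y$ and $\log x$ differ only by a multiplicative constant gives the claimed bound, with $r\alpha = p^{n-1}(p-1)\alpha$ matching the exponent in the theorem. The step that will require the most care is verifying the analytic hypotheses of Delange's theorem, i.e.\ establishing the factorization $F(s) = H(s)(s-1)^{-r\alpha}$ on a half-plane $\Re(s) \geq 1 - \delta$ with suitable control on $H$. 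This reduces to the analytic behaviour of the partial $L$-function over the Chebotarev set $\mathfrak{T}'_E$, which is underpinned by effective Chebotarev for the extension $K(E[p])/\Q$ already used in the proof of Lemma \ref{density lemma}; once this analytic input is in place, the remainder of the argument is bookkeeping.
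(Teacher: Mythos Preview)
Your proposal is correct and follows essentially the same route as the paper: reduce to summing $r^{|V|}$ over finite $V$ with $\prod_{\ell\in V}\ell\le y$, form the Euler product $F(s)=\prod_{\ell}(1+r\ell^{-s})$, extract the singularity $(s-1)^{-r\alpha}$ via the density of $\mathfrak{T}_E$, and apply Delange's Tauberian theorem. Your version is in fact slightly more careful than the paper's own sketch in two places: you explicitly restrict $V$ to $\mathfrak{T}_E\setminus(S_\Q\cup Z)$ (as required by Theorem~\ref{main thm}) and you note the pairwise disjointness of the families $\mathfrak{B}_V$, both of which the paper leaves implicit.
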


\begin{proof}
   The argument of proof similar to \cite[Theorem 2.4, p.5 line -3 to p.6 line -10.]{serredivisibilite}. We provide a sketch here for completeness. Setting $y:=x^{\frac{1}{|B|(p^n-1)}}$, denote by $M(y)$ the collection of all finite subsets $V$ of $\mathfrak{T}_E$ such that 
\[\prod_{\ell\in V}\ell\leq y.\]From Theorem \ref{main thm} and Lemma \ref{lemma 4.8}, we have
\begin{equation}\label{equation 4.6}
    N_{\G, K}(E;x)\geq \sum_{V\in M(y)} \# \mathfrak{B}_V.
\end{equation}
Thus, from Theorem \ref{main thm} part (a) and \eqref{equation 4.6} we get 
\begin{equation}\label{equation 4.7}
    N_{\G, K}(E;x)\geq C \sum_{V\in M(y)} r^{|V|},
\end{equation}
where $r:=p^{n-1}(p-1)$. Consider the function 
\[F(s):=\sum_{\substack{V\subset \mathfrak{T}_E \\ V \text{ finite}}} \frac{r^{|V|} }{\prod\limits_{\ell \in V} \ell^s}= \prod_{\ell\in \mathfrak{T}_E}\left(1+ \frac{r}{ \ell^{s}}\right).\] 

\noindent Thus, we see that
\[\log F(s)=r \, \sum_{\ell\in \mathfrak{T}_E} \frac{1}{\ell^s}+\theta_1(s),\] where $\theta_1(s)$ is holomorphic in $\op{Re}(s)\geq 1$; and as a consequence,
\[\log F(s) = r \alpha \, \log\left(\frac{1}{s-1}\right)+\theta_2(s), \] where $\alpha = \mathfrak{d}(\mathfrak{T}_E)$, which is computed in Lemma \ref{density lemma}, and $\theta_2(s)$ is holomorphic in $\op{Re}(s)\geq 1$. Thus, we deduce that 
\[F(s)=(s-1)^{- \,r  \alpha} \, h(s),\] where $h(s)$ is a non-zero holomorphic function in $\op{Re}(s)\geq 1$. By Delange's Tauberian theorem (cf. \cite[Theorem 7.28]{Tenenbaum}), we deduce that 
\[N_{\G, K}(E;x)\gg y  \, (\log y)^{r \alpha - 1} \gg x^{\frac{1}{|B|(p^n-1)}} \, (\log x)^{ \left(p^{n-1}(p-1) \alpha \right) - 1}.\] This proves the result. 
\end{proof}

\section*{Acknowledgments}
We thank both the referees for helpful comments on an earlier version of this paper. 

\bibliographystyle{alpha}
\bibliography{references}
\end{document}